\numberwithin{equation}{section}
\theoremstyle{plain}
\newtheorem{theorem}[subsection]{Theorem}
\newtheorem{lemma}[subsection]{Lemma}
\newtheorem{corollary}[subsection]{Corollary}
\theoremstyle{definition}
\newtheorem{definition}[subsection]{Definition}
\renewcommand{\leq}{\leqslant}
\renewcommand{\geq}{\geqslant}
\newsavebox{\proofbox}
\savebox{\proofbox}{\begin{picture}(7,7)%
  \put(0,0){\framebox(7,7){}}\end{picture}}
\def\E{\mathbb{E}}
\def\N{\mathbb{N}}
\def\F{\mathbb{F}}
\def\Z{\mathbb{Z}}
\def\R{\mathbb{R}}
\def\C{\mathbb{C}}
\def\B{\mathcal{B}}
\def\rank{\operatorname{rank}}
\def\eps{\varepsilon}
\def\f{{\mathbf f}}
\begin{document}

\title[Length 4 progressions in finite field geometries]{New bounds for Szemer\'edi's theorem, Ia: Progressions of length 4 in finite field geometries revisited}

\author{Ben Green}
\address{Centre for Mathematical Sciences, Wilberforce Rd, Cambridge CB3 0WA, England}
\email{b.j.green@dpmms.cam.ac.uk}

\author{Terence Tao}
\address{Department of Mathematics, UCLA, Los Angeles CA 90095-1555, USA.
}
\email{tao@math.ucla.edu}

\thanks{BJG acknowledges the generous support of the European Research Council through ERC-2011-StG grant number 279438. TT is supported by NSF grant DMS-0649473.}

\begin{abstract} Let $p \geq 5$ be a prime.  We show that the largest subset of $\F_p^n$ with no 4-term arithmetic progressions has cardinality $O_p(N(\log N)^{-2^{-22}})$, where $N := |\F_p|^n = p^n$.  A result of this type was claimed in our previous paper, but the proof had a gap (and we issue an erratum for that paper here).  We give here a different and significantly shorter argument that yields the same bound.  In fact we prove a stronger result, which can be viewed as a quantatitive version of some previous results of Bergelson-Host-Kra and the authors.
 \end{abstract}

\maketitle

\section{Introduction}

Szemer\'edi's theorem \cite{szemeredi} asserts that any set of integers with positive upper density contains arbitrarily long arithmetic progressions. This is easily seen to be equivalent to the assertion that $r_k(N) = o_k(N)$ for all $k \geq 3$,
where $r_k(N)$ denotes the cardinality of the largest subset of $[N] = \{1,\dots,N\}$ containing no $k$-term arithmetic progression with distinct terms, and $o_k(N)$ denotes a quantity which, when divided by $N$, goes to zero as $N \to \infty$ for each fixed $k$.

Much attention has been devoted to the question of finding bounds for $r_k(N)$. The current state of the art is as follows:
\begin{enumerate}
\item Sanders \cite{sanders-r3} showed in 2010 that $r_3(N) \ll N(\log N)^{-1 + o(1)}$;
\item The authors \cite{green-tao-szem2} showed in 2005 that $r_4(N) \ll N e^{-c\sqrt{\log \log N}}$;
\item Gowers \cite{gowers-long-aps} showed in 1998 that $r_k(N) \ll N(\log \log N)^{-c_k}$ for every $k \geq 5$.
\end{enumerate}
We omit a detailed discussion of the history of the problem, referring the reader to the three papers cited above.

In studying these problems a great deal of mileage has been gained from studying what are known as \emph{finite field models}. Instead of $r_k(N)$ one considers $r_k(F^n)$, where $F$ is a finite field. The quantity $r_k(F^n)$ is defined to be the cardinality of the largest subset of the vector space $F^n$ containing no $k$-term arithmetic progression with distinct terms. In order that a $k$-term arithmetic progression not be degenerate, we must assume that $F$ has characteristic greater than $k$, and we assume that $F = \F_p$ is a prime field for notational simplicity. When $k = 3$ one traditionally takes $F = \F_3$, and for the purposes of this paper, where our main interest lies in the case $k = 4$, the reader will lose little by taking $F = \F_5$.  See \cite{green-fin-field} for a general discussion of the role of finite field models in additive combinatorics. 

Write $N := |F^n|$. Then the current state of the art for this question is as follows:
\begin{enumerate}
\item Bateman and Katz \cite{bateman-katz} showed in 2011 that $r_3(F^n) \ll_F N(\log N)^{-1 - c}$ for some absolute constant $c > 0$;
\item The authors \cite{gt-inverseu3} showed in 2005 that $r_4(F^n) \ll N(\log \log N)^{-c_F}$;
\item The authors \cite{green-tao-r4-finite-fields} in 2009 improved this bound to $r_4(F^n) \ll_F N e^{-c_F\sqrt{\log\log N}}$.  We also claimed the improved bound $r_4(F^n) \ll_F N (\log N)^{-c_F}$.
\item It is known, for instance by using the density Hales-Jewett theorem \cite{FK}, that $r_k(F^n) = o_{k,F}(N)$ for all $k \geq 5$, assuming of course that $F$ has characteristic at least $k$. 
\end{enumerate}

Recently, we discovered that our argument in \cite{green-tao-r4-finite-fields} claiming the bound $r_4(F^n) \ll_F N (\log N)^{-c_F}$ contains a gap, the nature of which is described in Appendix \ref{erratum}.  (The ``cheap'' bound $r_4(F^n) \ll_F N e^{-c_F\sqrt{\log\log N}}$ established in that paper is however not subject to this problem, nor is the analogous bound for $r_4(N)$ established in \cite{green-tao-szem2} by similar methods.) The main purpose of this paper is to provide an alternate, simpler, and -- most importantly -- correct argument that recovers this bound.  In fact, we obtain the following stronger statement. By an \emph{affine subspace} of $F^n$ we mean a coset of a linear subspace $\dot W$ of $F^n$.

\begin{theorem}\label{main}
Let $F = \F_p$ be a finite field with $p \geq 5$. Let $n \in \N$, let $0 < \alpha, \eps \leq 1$, and let $A$ be a subset of $F^n$ of density at least $\alpha$.  Then there exists an affine subspace $W$ of $F^n$ of codimension at most $C_{F} \eps^{-2^{20}}$ with the property that
\begin{equation*}
\label{goal}
 |\{ (x,r) \in W \times \dot W: x,x+r,x+2r,x+3r \in A \}| \geq (\alpha^4-\eps) |W|^2,
\end{equation*}
where $C_F > 0$ depends only on $F$.
\end{theorem}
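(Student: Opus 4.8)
The plan is to reduce the theorem to a regularity/counting argument organised around the $U^3$ Gowers norm and the polynomially effective inverse theorem for it over $\F_p^n$. Writing $f := 1_A$, and noting that a pair $(x,r)\in W\times\dot W$ with $x,x+r,x+2r,x+3r\in A$ is exactly a $4$-term progression contained in the affine subspace $W$, the task is to find an affine subspace $W$ of codimension $\eps^{-O_F(1)}$ with
\[
\Lambda_4^W(f)\ :=\ \E_{x\in W,\ r\in\dot W}f(x)f(x+r)f(x+2r)f(x+3r)\ \geq\ \alpha^4-\eps .
\]
We may assume $\alpha^4>\eps$ (else $W=\F_p^n$ works), and we shall prove the bound with $\eps$ replaced by a small multiple of itself; the codimension exponent $2^{20}$, surely far from optimal, is what emerges from composing the polynomial steps below.

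\emph{Step 1: a high-rank quadratic regularisation.} Using a quantitative $U^3$ arithmetic regularity lemma over $\F_p^n$ — whose bounds are polynomial in $\eps^{-1}$ precisely because the $U^3$-inverse theorem over $\F_p^n$ is polynomially effective — I would produce an affine subspace $W$ of codimension $\eps^{-O_F(1)}$ on which $f$ admits a decomposition $f|_W=f_{\mathrm{str}}+f_{\mathrm{sml}}+f_{\mathrm{unf}}$, where $f_{\mathrm{str}}=\Psi(q_1,\dots,q_d)$ is a $[0,1]$-valued function of $d=\eps^{-O_F(1)}$ quadratic polynomial phases on $W$ that are jointly of very high rank, $\|f_{\mathrm{sml}}\|_{L^2(W)}$ and $\|f_{\mathrm{unf}}\|_{U^3(W)}$ are both small, and $\E_W f_{\mathrm{str}}\geq\alpha-\eps$. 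The linear part of the regular factor is eliminated by choosing $W$ to be a coset of the common kernel of the linear phases (on which each linear phase is constant), and "jointly of high rank" is arranged by absorbing any low-rank quadratic form — which is a function of boundedly many linear forms — into the linear part before iterating. The bound $\E_W f_{\mathrm{str}}\geq\alpha-\eps$ is where the choice of coset matters, together with the fact that the means of the bounded error functions over a generic coset are small.

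\emph{Step 2: removing the errors, passing to the quadratic model, and positivity.} The generalised von Neumann inequality bounds $|\Lambda_4^W(g_0,g_1,g_2,g_3)|$ by $\min_i\|g_i\|_{U^3(W)}$ for $1$-bounded $g_i$, so every term of the multilinear expansion of $\Lambda_4^W(f)$ involving $f_{\mathrm{unf}}$ is negligible; a term involving $f_{\mathrm{sml}}$ (and otherwise $1$-bounded factors) becomes, after translating so that the $f_{\mathrm{sml}}$-variable is the base point, of the shape $\E_y f_{\mathrm{sml}}(y)\,h(y)$ with $\|h\|_\infty\leq1$, hence is $O(\|f_{\mathrm{sml}}\|_{L^2(W)})$ by Cauchy--Schwarz. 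Thus $\Lambda_4^W(f)=\Lambda_4^W(f_{\mathrm{str}})+O(\eps)$. Since the $q_j$ are jointly of very high rank on $W$, the tuples $\bigl(q_1(x),q_1(x+r),q_1(x+2r),\dots,q_d(x),q_d(x+r),q_d(x+2r)\bigr)$ equidistribute on $(\F_p^3)^d$ as $(x,r)$ ranges over $W\times\dot W$, while $q_j(x+3r)=3q_j(x+2r)-3q_j(x+r)+q_j(x)$ for each $j$ since a quadratic polynomial has vanishing third difference; regarding $\Psi$ as a function on $\F_p^d$, this identifies $\Lambda_4^W(f_{\mathrm{str}})$, up to a small equidistribution error, with the model average
\[
M\ :=\ \E_{\vec b_0,\vec b_1,\vec b_2\in\F_p^d}\ \Psi(\vec b_0)\,\Psi(\vec b_1)\,\Psi(\vec b_2)\,\Psi\bigl(3\vec b_2-3\vec b_1+\vec b_0\bigr).
\]
Expanding $\Psi$ in characters of $\F_p^d$ with coefficients $\widehat\Psi$, the average over $\vec b_0,\vec b_1,\vec b_2$ forces the frequency quadruple onto the line $(-t,3t,-3t,t)$, $t\in\F_p^d$, so, $\Psi$ being real,
\[
M\ =\ \sum_{t\in\F_p^d}\widehat\Psi(-t)\,\widehat\Psi(3t)\,\widehat\Psi(-3t)\,\widehat\Psi(t)\ =\ \sum_{t\in\F_p^d}|\widehat\Psi(t)|^2\,|\widehat\Psi(3t)|^2\ \geq\ |\widehat\Psi(0)|^4\ =\ (\E\Psi)^4\ \geq\ (\alpha-\eps)^4 .
\]
Chaining the estimates and absorbing constants gives $\Lambda_4^W(f)\geq\alpha^4-\eps$. (This termwise positivity is the finite-field shadow of the Bergelson--Host--Kra phenomenon special to length $4$: for length $5$ one is led to a cubic phase, a two-dimensional solution set, and no such positive formula — consistent with the failure of the analogous statement there.)

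I expect the genuine difficulty to be concentrated in Step 1, and specifically in making it both correct and polynomially efficient. A tower-type regularity lemma is useless for the stated bound, so one must interleave the polynomial $U^3$-inverse theorem, the rank-reduction for systems of quadratic forms, and the energy increment so that each step costs only polynomially in $\eps^{-1}$ and the iteration terminates after polynomially many steps. Conceptually the subtle point — and the natural place for such an argument to go wrong — is that one cannot simply pass to a subspace on which the quadratic phases become \emph{constant}: a high-rank quadratic form is non-constant on every subspace of more than bounded codimension, so the equidistribution and model computations of Step 2 must be run with the quadratic phases still live, having been regularised rather than trivialised, and the bookkeeping that keeps $\E_W f_{\mathrm{str}}$ close to $\alpha$ while the error terms remain small on $W$ is correspondingly delicate.
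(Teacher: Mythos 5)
Your proposal follows essentially the same route as the paper: a local Koopman--von Neumann/regularity step producing, on an affine subspace of polynomial codimension, an approximation of $1_A$ in the $U^3$ norm by a bounded function of a \emph{high-rank} quadratic factor with controlled mean (with the rank-reduction interleaved into the energy-increment iteration so that all costs stay polynomial in $\eps^{-1}$ --- which is indeed the crux, and is the content of Theorem \ref{kvn-theorem}), followed by the generalised von Neumann inequality to pass to the structured part and the Fourier-analytic counting lemma with the Bergelson--Host--Kra positivity $\sum_{t}|\widehat{\Psi}(t)|^2|\widehat{\Psi}(3t)|^2\geq|\widehat{\Psi}(0)|^4$. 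The only cosmetic difference is your three-term decomposition versus the paper's two-term $1_A=\E(1_A|\B)+(1_A-\E(1_A|\B))$; the substance is identical.
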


A qualitative variant of this theorem already appeared (as a joint result of the authors of the present paper) in \cite[Theorem 4.1]{montreal}, which in turn was inspired by an ergodic theoretic result of Bergelson, Host, and Kra \cite{BHK}; see also \cite[Theorem 1.12]{green-tao-reg} for another related result.  Note that the quantity $\alpha^4 |W|^2$ is the natural quantity associated to the statistic $|\{ (x,r) \in W \times \dot W: x,x+r,x+2r,x+3r \in A \}|$, as if $A$ were a random subset of $F^n$ with density $\alpha$, then the expected value of this statistic would indeed be $\alpha^4 |W|^2$.  The exponent $2^{20}$ is certainly not best possible, and is mostly dependent on the exponent $2^{16}$ appearing in the inverse theorem for the $U^3$ norm in \cite{gt-inverseu3}; any improvement on the exponents in the latter result would lead to improvements in the exponents here.

As an immediate corollary of the above theorem, we recover the main result claimed in \cite{green-tao-r4-finite-fields}.

\begin{corollary}\label{main-cor}
Let $F = \F_p$ be a finite field with $p \geq 5$. Let $n \in \N$, and write $N := |F^n|$.  Then $r_4(F^n) \ll_F N(\log N)^{-2^{-22}}$.
\end{corollary}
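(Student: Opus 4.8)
The plan is to derive Corollary~\ref{main-cor} from Theorem~\ref{main} by a standard density-increment/energy-increment bootstrap. Suppose $A \subseteq \F_p^n$ has density $\alpha$ and contains no nondegenerate $4$-term progression. The number of \emph{degenerate} quadruples $(x,r)$ with $r = 0$ is exactly $|A| = \alpha N$, so the count $|\{(x,r) \in W \times \dot W : x, x+r, x+2r, x+3r \in A\}|$ for any affine subspace $W$ is at most $|A \cap W| \le |W|$ (only the $r=0$ terms survive). On the other hand, Theorem~\ref{main} produces an affine subspace $W$ of codimension $O_F(\eps^{-2^{20}})$ with this count at least $(\alpha^4 - \eps)|W|^2$. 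Comparing, we get $(\alpha^4 - \eps)|W| \le 1$, hence $|W| \le (\alpha^4 - \eps)^{-1}$.

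The second ingredient is the lower bound on $|W|$: since $W$ has codimension at most $d := \lceil C_F \eps^{-2^{20}} \rceil$, we have $|W| = p^{n - \operatorname{codim} W} \ge p^{n-d} = N/p^d$. Combining with $|W| \le (\alpha^4-\eps)^{-1}$ gives $N \le p^d (\alpha^4 - \eps)^{-1}$. The remaining task is purely to optimize the choice of $\eps$ in terms of $\alpha$ (equivalently $N$). We take $\eps = \alpha^4/2$, so that $\alpha^4 - \eps = \alpha^4/2$ and $d \le C_F (2/\alpha^4)^{2^{20}} = C_F' \alpha^{-2^{22}}$ (absorbing the factor $2^{2^{20}}$ into the constant). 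Then $N \le p^{C_F' \alpha^{-2^{22}}} \cdot 2\alpha^{-4}$, and taking logarithms, $\log N \le C_F'' \alpha^{-2^{22}}$ for $N$ large (the $2\alpha^{-4}$ factor is lower-order and $\alpha \le 1$). Rearranging yields $\alpha \le C_F''' (\log N)^{-2^{-22}}$, which is precisely the asserted bound $r_4(\F_p^n) \ll_F N (\log N)^{-2^{-22}}$ after multiplying through by $N$.

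The one point requiring a little care is that this argument only bounds the density of a progression-free set, so we should phrase it as: if $|A| = \alpha N$ with no nondegenerate $4$-AP then $\alpha \ll_F (\log N)^{-2^{-22}}$; the case where $A$ does contain such a progression is vacuous for the definition of $r_4$. I should also check that the degenerate count is handled correctly: a quadruple $(x,r) \in W \times \dot W$ with all four points in $A$ and $r \ne 0$ would be a genuine nondegenerate progression inside $A$ (here we use $p \ge 5$ so that $x, x+r, x+2r, x+3r$ are distinct when $r \ne 0$), contradicting the hypothesis; hence indeed only $r = 0$ contributes, giving the count $|A \cap W| \le |W|$.

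There is no real obstacle here: all the difficulty has been pushed into Theorem~\ref{main}. The only thing to get right is the bookkeeping of constants through the exponentiation — in particular that raising $\eps^{-2^{20}}$ with $\eps \asymp \alpha^4$ produces the exponent $2^{22}$ on $\alpha^{-1}$ (since $4 \cdot 2^{20} = 2^{22}$), and correspondingly the exponent $-2^{-22}$ on $\log N$ after inverting. One should state explicitly that $C_F$ may change from line to line, or track a single renamed constant, to keep the write-up clean.
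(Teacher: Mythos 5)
Your proposal is correct and follows essentially the same route as the paper: apply Theorem \ref{main} with $\eps = \alpha^4/2$, note that only the $r=0$ quadruples survive for a progression-free set so the count is at most $|W|$, deduce $|W| \le 2/\alpha^4$, and combine with the codimension bound to get $n \ll_F \alpha^{-2^{22}}$ and hence $\alpha \ll_F (\log N)^{-2^{-22}}$. Your extra care about degeneracy (using $p \ge 5$ to ensure the four terms are distinct when $r \ne 0$) is a point the paper leaves implicit, but the argument is the same.
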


\begin{proof}  Let $A$ be a subset of $F^n$ with no length $4$ progressions and cardinality $r_4(F^n)$, and set $\alpha := |A|/|F^n| = r_4(F^n)/N$.  By Theorem \ref{main} with $\eps=\alpha^4/2$ (say), we can find an affine subspace $W$ of $F^n$ of codimension at most $C_F \alpha^{-2^{22}}$ for some $C_F>0$ depending only on $F$, such that
$$ |\{ (x,r) \in W \times \dot W: x,x+r,x+2r,x+3r \in A \}| \geq \frac{1}{2} \alpha^4 |W|^2.$$
On the other hand, as $A$ has no length $4$ progressions, the left-hand side is at most $|W|$. We conclude that $|W| \leq 2/\alpha^4$ which, when combined with the codimension bound on $W$, implies that 
$$n \leq C_F \alpha^{-2^{22}} + \log_{|F|} \frac{2}{\alpha^4} \ll_F \alpha^{-2^{22}}.$$
This gives $\alpha \gg_F (\log N)^{-2^{-22}}$, and the claim follows.\vspace{11pt}
\end{proof}


\section{Notation and an outline of the argument}\label{outline-sec}
 
Throughout this paper the field $F$ is fixed, and all constants are permitted to depend on $F$.  As such we will no longer explicitly subscript these constants by $F$, for instance abbreviating $c_F$ as $c$.

For technical reasons it is convenient to replace the vector space $F^n$ by the more general concept of an \emph{affine space}, by which we mean a coset $W = x + \dot W$ of a linear subspace $\dot W$ of some ambient vector space $F^n$, where $x$ is also an element of $F^n$.  We will often refer to $W$ without any explicit mention of the underlying space $F^n$. The dimension of $W$, $\dim(W)$, is defined to be $\dim(\dot{W})$.  If $W'$ is an affine space which is contained in another affine space $W$, we call $W'$ an \emph{affine subspace} of $W$, and define the \emph{codimension} of $W'$ inside $W$ to be $\dim(W) - \dim(W')$.

Our argument is similar to that in \cite{montreal} or \cite{green-tao-reg}, but with more attention paid to the quantitative estimates.  The main step in our argument will be what we call a \emph{local Koopman-von Neumann theorem}, the detailed statement of which is Theorem \ref{kvn-theorem}.  Roughly speaking, this theorem asserts that if $A$ is a subset of some affine space $W$ of some density $\alpha$, then we can find an an affine subspace $W'$ of $W$ of large codimension on which $A$ can be approximated (in the sense of the Gowers $U^3(W')$ norm) by a ``quadratically structured'' function $f$, that is to say a function of a bounded number of quadratic polynomials on $W'$.   Furthermore we may ensure that the density of $A$ on $W'$ is basically at least as large as $\alpha$, and crucially we may also ensure that the quadratic polynomials involved in the construction of $f$ obey a ``high rank'' condition, in the sense that any non-trivial linear combination of these polynomials has high rank.  The most important ingredient in the proof of Theorem \ref{kvn-theorem} is the inverse theorem for the Gowers $U^3$-norm in finite fields \cite{gt-inverseu3}.  

Once Theorem \ref{kvn-theorem} is proven it follows from the theory of Gowers norms that the count of $4$-term arithmetic progressions of $A$ in $W'$ is very close to the corresponding count of $4$-term arithmetic progressions weighted by $f$.  On the other hand, by invoking a ``counting lemma'' we will be able to obtain an accurate and explicit Fourier-analytic formula for the number of $4$-term arithmetic progressions weighted by $f$.  It turns out that there is a useful positivity property in this formula, essentially first observed in \cite{BHK} in a slightly different context, which allows 
one to give a lower bound for this count of essentially $\alpha^4$. This gives the main theorem.

The paper is organised as follows. In \S \ref{lambda-sec} we define the Gowers $U^3$-norm and prove some simple facts relating it to 4-term progressions. Section \S \ref{kvn-sec} is the heart of the paper: here we prove the local Koopman-von Neumann theorem, Theorem \ref{kvn-theorem}. Section \S \ref{mixing-sec} is concerned with analysing quadratically structured functions, and in particular with counting 4-term progressions weighted by them. From this, the main theorem is easily established.

\emph{Notation.} Our notation is standard in additive combinatorics. We draw the reader's attention to our use of $\E_{x \in X} f(x)$ to denote the average of $f$ over the (finite) set $X$. We write $\Vert f \Vert_{L^1(X)} := \E_{x \in X} |f(x)|$ and $\Vert f \Vert_{L^2(X)} := (\E_{x \in X} |f(x)|^2)^{1/2}$. We use the letter $C$ to denote an absolute constant; it need not be the same at every occurrence. When we want to emphasise different constants we use subscripts and refer to $C_0, C_1, C_2,\dots$. In this paper, each constant $C$ could be specified explicitly if desired.  We use $X \ll Y$ or $X=O(Y)$ to denote the bound $|X| \leq C Y$ for some constant $C$. 
 
\section{Progression of length 4 and the $U^3$ norm}\label{lambda-sec}

Recall from the previous section the notion of an affine space $W$ with associated linear space $\dot{W}$.

 Let $W$ be an affine space over $F$. If $f_0,f_1,f_2,f_3 : W \rightarrow \R$ are functions then we define
 \[ T_W(f_0,f_1,f_2,f_3) := \E_{x \in W,h \in \dot{W}} f_0(x) f_1(x+d) f_2(x + 2d) f_3(x + 3d),\] a normalised count of the 4-term arithmetic progressions in $W$ weighted by the functions $f_0,f_1,f_2$ and $f_3$.
 In the special case in which all the $f_i$ are equal to some function $f$ then we will  write
 \[ T_W(f) := T_W(f,f,f,f).\]
 
We record a bound for $T_W$ in terms of the Gowers $U^3$-norm, a result of a type known as a \emph{generalized von Neumann theorem}. For a much lengthier introduction to the Gowers $U^3$-norm, see \cite{gt-inverseu3}. If $f : W \rightarrow \C$ is a function, we define $\Vert f \Vert_{U^3(W)}$ to be the unique non-negative real number such that
\begin{eqnarray*} \Vert f \Vert_{U^3(W)}^8 & := & \E_{x \in W; h_1,h_2,h_3 \in \dot W}(f(x)\overline{f(x+h_1)f(x+h_2)f(x+h_3)}f(x+h_1+h_2) \times \\
& & \qquad\qquad\qquad\qquad \times f(x+h_2+h_3)f(x+h_1+h_3) \overline{f(x+h_1+h_2+h_3)}).\end{eqnarray*}
This is the standard definition, modified slightly so that it applies to affine spaces as well as linear ones. It can be shown that the quantity on the right is real and non-negative, so $\Vert f \Vert_{U^3(W)}$ is well-defined. It can also be shown that $\Vert  \cdot \Vert_{U^3(W)}$ defines a norm, but we shall not need this fact in this paper.

The next lemma is of a type referred to in the literature as a Generalised Von Neumann Theorem.

\begin{lemma}\label{gvn-lem} Let $W$ be an affine space and suppose that $f_0,f_1,f_2,f_3 : W \rightarrow \C$ are bounded in magnitude by $1$. Then we have
\[ |T_W(f_0,f_1,f_2,f_3)| \leq \min_{0 \leq i \leq 3} \|f_i\|_{U^3(W)}.\]
\end{lemma}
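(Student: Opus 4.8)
The plan is to prove the bound $|T_W(f_0,f_1,f_2,f_3)| \leq \|f_i\|_{U^3(W)}$ for each $i$ separately by the standard van der Corput / Cauchy--Schwarz iteration that underlies all generalized von Neumann theorems. By symmetry considerations (relabelling the progression $x, x+d, x+2d, x+3d$ — reversing it swaps the roles of $f_0 \leftrightarrow f_3$ and $f_1 \leftrightarrow f_2$, and one can also translate the base point), it suffices to establish the estimate for, say, $i = 0$; the remaining cases follow by the same argument after an affine change of variables in the pair $(x,d)$. Since $f_0$ is bounded by $1$, the four-linear form $T_W(f_0,f_1,f_2,f_3)$ is bounded by the corresponding form with $f_0$ replaced by $1$ times the other three, and we apply Cauchy--Schwarz three times, once to eliminate each of $f_1, f_2, f_3$, at the cost of differencing in a new direction each time.

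Concretely, I would first rewrite the average over $(x,d) \in W \times \dot W$ by substituting linear combinations so that each of the three "differencing" steps introduces an independent shift variable $h_1, h_2, h_3 \in \dot W$. The first Cauchy--Schwarz in the variable conjugate to $f_1$ replaces $f_1$ by $f_1(\cdot) \overline{f_1(\cdot + h_1)}$ and squares the expression; the second, in the variable conjugate to $f_2$, introduces $h_2$ and a product of four shifts of $f_2$; the third introduces $h_3$. After these three applications, the functions $f_1, f_2, f_3$ have all been replaced by $1$ (using $|f_i| \leq 1$ to discard the resulting $L^\infty$-bounded products at each stage, keeping only the copies of $f_0$), and what remains is exactly the $2^3 = 8$-fold product defining $\|f_0\|_{U^3(W)}^8$, up to the affine substitutions which preserve the uniform measure on $W \times \dot W$. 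Taking eighth roots, and tracking that each Cauchy--Schwarz step squares the quantity (so three steps give an exponent $8$), yields $|T_W(f_0,f_1,f_2,f_3)| \leq \|f_0\|_{U^3(W)}$.

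The one genuinely delicate point is the bookkeeping of the linear change of variables: one must choose, at each Cauchy--Schwarz step, the correct pair of variables to "freeze" and "average over" so that (a) the step is a legitimate Cauchy--Schwarz in a complete variable, (b) the discarded shifted copies of $f_1, f_2, f_3$ really are bounded by $1$ in magnitude, and (c) the final eight shifts of $f_0$ land in the configuration $\{x, x+h_1, x+h_2, x+h_3, x+h_1+h_2, x+h_2+h_3, x+h_1+h_3, x+h_1+h_2+h_3\}$ with the correct conjugation pattern. Since we are on an affine space $W$ with translation group $\dot W$, all these substitutions $(x,d) \mapsto (x + \text{linear}, d + \text{linear})$ are measure-preserving bijections of $W \times \dot W$, so no boundary terms or normalization issues arise. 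I do not expect any obstacle beyond this routine (if slightly tedious) verification, and I would present it by carrying out the three Cauchy--Schwarz steps in sequence, displaying the intermediate expression after each, rather than writing out all substitutions explicitly.
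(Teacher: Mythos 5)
Your plan is the standard three-fold Cauchy--Schwarz argument, which is exactly what the paper does: it simply cites \cite[Proposition 1.7]{gt-inverseu3}, whose proof in \S 4 of that paper proceeds by the same three applications of Cauchy--Schwarz, with the extension to affine spaces being trivial. One small bookkeeping remark: at each Cauchy--Schwarz step it is the function being \emph{eliminated} that is pulled out and bounded in $L^2$ by $1$ (having been parametrized to depend only on the outer variable), while the \emph{remaining} functions get duplicated with the new shift $h_j$ --- but this does not affect the correctness of the overall approach.
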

 \begin{proof} This is \cite[Proposition 1.7]{gt-inverseu3}, and is proved in \S 4 of that paper using three applications of the Cauchy-Schwarz inequality. Versions of this inequality appear in several earlier works also, such as \cite{gowers-long-aps}.  The extension to affine spaces $W$ is trivial and left to the reader.\end{proof}
 
Using the telescoping identity 
\[T_W(f)-T_W(g) = T_W(f-g,g,g,g)  + T_W(f,f-g,g,g)  + T_W(f,f,f-g,g)  + T_W(f,f,f,f-g),
\]
we conclude the following bound.

\begin{lemma}\label{u3-lambda4} 
Let $f, g: W \to \C$ be functions on an affine space $W$ bounded in magnitude by $1$.  Then we have
$$ |T_W(f) - T_W(g)| \leq 4 \|f-g\|_{U^3(W)}.$$
\end{lemma}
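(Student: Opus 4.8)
The plan is to combine the telescoping identity displayed just above the statement with the generalized von Neumann inequality of Lemma \ref{gvn-lem}. First I would apply the triangle inequality to that identity, reducing matters to showing that each of the four terms $T_W(f-g,g,g,g)$, $T_W(f,f-g,g,g)$, $T_W(f,f,f-g,g)$, $T_W(f,f,f,f-g)$ has absolute value at most $\|f-g\|_{U^3(W)}$.

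For a single such term, the idea is to invoke Lemma \ref{gvn-lem} with the index $i$ taken to be the slot occupied by $f-g$, so that the minimum in that lemma is estimated by the $U^3$-norm of $f-g$ (up to normalization). The one point requiring care is that Lemma \ref{gvn-lem} asks all four inputs to be bounded in magnitude by $1$, whereas $f-g$ is only bounded by $2$. I would get around this using the (obvious) multilinearity of $T_W$ in its four arguments: extract a factor of $\frac{1}{2}$ from the $(f-g)$-slot, replacing $f-g$ by $\frac{1}{2}(f-g)$, which is bounded by $1$. Then all four inputs are bounded by $1$, Lemma \ref{gvn-lem} applies and gives a bound of $\|\frac{1}{2}(f-g)\|_{U^3(W)} = \frac{1}{2}\|f-g\|_{U^3(W)}$ for the rescaled term, hence $\|f-g\|_{U^3(W)}$ for the original term; here I use that $\|\cdot\|_{U^3(W)}$ is homogeneous of degree $1$, which is immediate from its defining formula.

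Summing the four contributions then yields $|T_W(f)-T_W(g)| \le 4\|f-g\|_{U^3(W)}$, as required. There is essentially no genuine obstacle in this argument; the only step that needs a moment's thought is the boundedness normalization when applying the generalized von Neumann theorem to the difference $f-g$, and that is dispatched by the rescaling trick above.
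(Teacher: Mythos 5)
Your proposal is correct and follows exactly the route the paper intends: the telescoping identity combined with Lemma \ref{gvn-lem} applied to the slot containing $f-g$. The rescaling by $\frac{1}{2}$ to meet the boundedness hypothesis is a legitimate (and harmless) refinement, since both $T_W$ and the $U^3$ norm are homogeneous of degree $1$ in that argument, so the factors cancel and each term is bounded by $\|f-g\|_{U^3(W)}$ as claimed.
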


\section{Factors and Quadratically structured functions}\label{kvn-sec}

In this section we develop the language and tools needed to discuss the ``quadratically structured functions'' mentioned in \S \ref{outline-sec}.

\begin{definition}[Factors] If $W$ is a finite set then by a \emph{factor} $\B$ we mean simply a partition of $W$ into finitely many pieces which, in this paper, we refer to as \emph{atoms}.
\end{definition}
\emph{Remark.}  The nomenclature hints at connections with ergodic theory which in some sense inspire some of the arguments of this paper. We say that a function $\phi : W \rightarrow \C$ is $\B$-measurable if it is constant on atoms of $\B$. 

If $f : W \rightarrow \C$ is any function then we may define the \emph{conditional expectation} $$ \E(f|\B)(x) := \E_{\B(x)}f \hbox{ for all } x \in W,$$
where $\B(x)$ is the unique atom in $\B$ that contains $x$.  Equivalently, $\E(f|\B)$ is the orthogonal projection (in the Hilbert space
$L^2(W)$) to the space $\B$-measurable functions. 

Suppose that we are given a finite collection $\phi_1,\dots,\phi_d$ of functions from $X$ to some other set $Y$. Then these may be used to define a factor $\B = \B_{\phi_1,\dots, \phi_d}$ in a natural way by taking the atoms of $\B$ to consist of sets of the form $\{x \in X : \phi_1(x) = y_1, \dots, = \phi_d(x) = y_d\}$. For factors defined in this way we refer to $d$ as (an upper bound for) the \emph{complexity} of the factor $\B$.

We say that a factor $\B'$ is a \emph{refinement} of $\B$ if every atom in $\B$ is a union of atoms in $\B'$.
We will also need the notion of the \emph{join} $\B \vee \B'$ of two factors, which is simply the factor formed by intersecting the atoms of $\B$ with those of $\B'$ (or equivalently, the minimal factor that refines both $\B$ and $\B'$). Note that $\B_{\phi_1,\dots, \phi_d} \vee \B_{\phi'_1,\dots, \phi'_{d'}} = \B_{\phi_1,\dots, \phi_d, \phi'_1,\dots, \phi_{d'}}$ for any functions $\phi_i, \phi'_i$.

\begin{definition}[Quadratic functions]
Suppose that $W$ is a linear space. By choosing a basis for $W$ we may identify it with $F^n$ for some $n$. By a \emph{quadratic  function} on $W$ we mean a function $\phi : W \rightarrow F$ of the form $\phi(x) = x^T M x + r^T x + c$, where $M$ is an $n \times n$ symmetric matrix over $F$, $r \in F^n$, and $c \in F$.  By the \emph{rank} of $\phi$ we understand the rank of the matrix $M$.
More generally, if $W = \dot{W} + w$ is an affine space then $\phi : W \rightarrow F$ is a quadratic function if the function $\dot{\phi} : \dot{W} \rightarrow F$ defined by $\dot{\phi}(x) := \phi(x + w)$ is a quadratic function on $\dot{W}$.  We define the rank of $\phi$ to be the rank of $\dot{\phi}$.\end{definition}

\begin{definition}[Quadratic factor]
If $X = W$ is an affine space and the $\phi_i$ are all quadratic functions then we refer to $\B = \B_{\phi_1,\dots,\phi_d}$ as a \emph{quadratic factor}. 
\end{definition}

We will be mostly interested in quadratic factors with a particularly pleasant property.

\begin{definition}[Quadratic factors and rank]\label{quad-factor-def}
Let $W$ be an affine space. Then by a \emph{quadratic factor of rank at least $r$ and complexity $d$} we mean a factor $\B = \B_{\phi_1,\dots,\phi_d}$ defined by quadratic functions $\phi_1,\dots,\phi_d : W \rightarrow F$ which satisfy the rank separation condition $\rank(\lambda_1 \phi_1 + \dots + \lambda_d \phi_d) \geq r$ whenever $\lambda_1,\dots,\lambda_d$ are elements of $F$, not all zero.
\end{definition}

The utility of the rank separation condition will become clear as we proceed, and is particularly clearly illustrated by Lemma \ref{quad-expect}, where it is shown that all atoms of $\B$ have roughly the same size if one assumes this condition. One may also count arithmetic progressions across atoms of a high-rank quadratic factor: see Lemma \ref{quad-expect-4}.  Some related use of high rank quadratic factors and functions occur in \cite{gowers-wolf,green-tao-reg,tz-low}.
\vspace{11pt}

For technical reasons we will need to ``localise'' quadratic factors to certain subspaces.  This requires some additional definitions.

\begin{definition}[Local factors]  Let $W$ be an affine space.  By a \emph{local factor of codimension at most $D$} we mean a factor $\B_1$ of $W$ whose atoms are all affine subspaces of $W$ of codimension at most $D$; note that we allow these subspaces to have different orientations (and even different codimensions).  By a \emph{local quadratic factor of codimension at most $D$, rank at least $r$, and complexity at most $d$} we mean a pair $(\B_1,\B_2)$ of factors, where $\B_1$ is a local factor on $W$ of codimension at least $D$, and $\B_2$ is an extension of $\B_1$ with the property that on each atom $W'$ of $\B_1$, the restriction $\B_2\downharpoonright_{W'}$ of $\B_2$ to $W'$ is a quadratic factor on $W$ of rank at most $r$ and complexity at most $d$.  

We say that a local quadratic factor $(\B'_1,\B'_2)$ is a refinement of another local quadratic factor $(\B_1,\B_2)$ if $\B'_1$ is a refinement of $\B_1$ and $\B'_2$ is a refinement of $\B_2$.
\end{definition}

\emph{Some facts about factors.} In this subsection we collect together some lemmas about factors, and quadratic factors in particular.

\begin{lemma}\label{refine}
Suppose that $X$ is a finite set and that $f : X \rightarrow \C$ is a function. Suppose that $\B$ and $\B'$ are two factors, with $\B'$ a refinement of $\B$. Then
\[ \Vert \E(f | \B') \Vert_{L^2(X)} \geq \Vert \E(f | \B) \Vert_{L^2(X)}.\]
\end{lemma}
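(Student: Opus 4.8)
The plan is to exploit the Hilbert-space description of conditional expectation already recorded in the text: $\E(\cdot|\B)$ is the orthogonal projection in $L^2(X)$ onto the subspace $V_\B$ of $\B$-measurable functions, and likewise $\E(\cdot|\B')$ is projection onto $V_{\B'}$. The first step is the observation that $V_\B \subseteq V_{\B'}$: since $\B'$ refines $\B$, every atom of $\B$ is a union of atoms of $\B'$, so any function constant on the atoms of $\B$ is a fortiori constant on the atoms of $\B'$. Consequently $\E(f|\B)$ lies in $V_{\B'}$, and since orthogonal projection onto $V_\B$ factors through projection onto the larger space $V_{\B'}$, we get the tower identity $\E(f|\B) = \E(\E(f|\B')|\B)$.

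The second step is simply that an orthogonal projection is norm-nonincreasing: for any $g \in L^2(X)$ we have $\|\E(g|\B)\|_{L^2(X)} \leq \|g\|_{L^2(X)}$, because $g = \E(g|\B) + (g - \E(g|\B))$ is an orthogonal decomposition. Applying this with $g := \E(f|\B')$ and using the tower identity from the first step yields
\[ \|\E(f|\B)\|_{L^2(X)} = \|\E(\E(f|\B')|\B)\|_{L^2(X)} \leq \|\E(f|\B')\|_{L^2(X)}, \]
which is exactly the claim.

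There is really no serious obstacle here; the only thing to be careful about is justifying the refinement-implies-inclusion step and hence the tower property, rather than treating it as a black box. If one prefers to avoid invoking Hilbert-space language at all, the same inequality can be obtained atom-by-atom: fix an atom $B$ of $\B$ and write it as a disjoint union $B = B'_1 \cup \dots \cup B'_k$ of atoms of $\B'$; then $\E(f|\B)$ is the constant $\mu_B := \frac{1}{|B|}\sum_i \sum_{x \in B'_i} f(x)$ on $B$, which is the $|B'_i|/|B|$-weighted average of the constants $\mu_{B'_i}$, so by the Cauchy–Schwarz (or Jensen) inequality $|\mu_B|^2 \leq \sum_i \frac{|B'_i|}{|B|} |\mu_{B'_i}|^2$. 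Multiplying by $|B|/|X|$ and summing over all atoms $B$ of $\B$ gives $\|\E(f|\B)\|_{L^2(X)}^2 \leq \|\E(f|\B')\|_{L^2(X)}^2$, as required.
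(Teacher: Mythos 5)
Your proof is correct and follows essentially the same route as the paper: the tower identity $\E(f|\B) = \E(\E(f|\B')|\B)$ combined with the fact that orthogonal projection does not increase the $L^2$ norm (which is exactly the Pythagoras decomposition the paper writes out). The atom-by-atom Jensen argument you append is a valid elementary alternative, but the main argument matches the paper's.
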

\begin{proof}
We have $\E( \E(f | \B') | \B) = \E(f | \B)$, and so $\E(f | \B)$ is the orthogonal projection of $\E(f | \B')$ (in $L^2(X)$) to the space of $\B$-measurable functions. In particular $\E(f | \B') - \E(f | \B)$ is orthogonal to $\E(f | \B)$, and so Pythagoras' theorem yields
\[ \Vert \E(f | \B') \Vert_{L^2(X)}^2 = \Vert \E(f | \B) \Vert_{L^2(X)}^2 + \Vert \E(f | \B') - \E(f | \B) \Vert_{L^2(X)}^2 \geq \Vert \E(f | \B) \Vert_{L^2(X)}^2.\]This concludes the proof.\vspace{11pt}
\end{proof}

We shall refer to $\Vert \E(f | \B) \Vert_{L^2(X)}^2$ as the \emph{energy} of $f$ relative to the factor $\B$. Note that if $f$ is bounded (by 1) then the energy lies in the interval $[0,1]$. 

The following lemma, which shows how to make a quadratic factor high-rank, is crucial.

\begin{lemma}\label{rank-reduce}
Suppose that $W$ is an affine space and that $\B$ is a quadratic factor of complexity at most $d$ on $W$. Then there is a local quadratic factor $(\B_1,\B_2)$ of codimension at most $dr+d^2+d$, rank at least $r$, and complexity at most $d$, such that $\B_2$ is a refinement of $\B$.
\end{lemma}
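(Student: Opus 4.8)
The plan is to argue by induction on the complexity $d$. Write $\B=\B_{\phi_1,\dots,\phi_d}$. If $d=0$, or more generally if $\B$ already has rank at least $r$ on $W$, then the trivial local quadratic factor $(\B_1,\B_2)=(\{W\},\B)$ works, with codimension $0$. Otherwise, since $\B$ does not have rank at least $r$, there is a nontrivial linear combination $\psi:=\lambda_1\phi_1+\dots+\lambda_d\phi_d$ with $\rank(\psi)=r'<r$; after reindexing we may assume $\lambda_d\neq 0$, so that $\B=\B_{\phi_1,\dots,\phi_{d-1},\psi}$.

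The crucial elementary observation is that a quadratic function of rank $r'$ on an affine space is a function of at most $r'+1$ affine-linear functions. Since $p\geq 5$ is odd, the symmetric bilinear form attached to $\psi$ diagonalises over $F$, and after completing the square (this is the one place invertibility of $2$ in $F$ is used) we may write $\psi=\sum_{i=1}^{r'}a_i u_i^2+L+c$, where $u_1,\dots,u_{r'}$ are affine-linear functions on $W$, $L$ is a linear function (the part of the linear term of $\psi$ not absorbed into the $u_i$), and $c$ is a constant. Thus $\psi$ is a function of the at most $r'+1\leq r$ affine-linear functions $u_1,\dots,u_{r'},L$. Let $\B'$ be the partition of $W$ into the common nonempty level sets of these functions: its atoms are affine subspaces of $W$ of codimension at most $r$, and $\psi$ is constant on each atom $W''$ of $\B'$.

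Since $\psi|_{W''}$ is constant and $\lambda_d$ is invertible, we have $\B_{\phi_1|_{W''},\dots,\phi_{d-1}|_{W''}}=\B_{\phi_1|_{W''},\dots,\phi_{d-1}|_{W''},\psi|_{W''}}=\B|_{W''}$; in particular this factor refines $\B|_{W''}$ and has complexity at most $d-1$. Applying the inductive hypothesis on each $W''$ to the $d-1$ quadratics $\phi_1|_{W''},\dots,\phi_{d-1}|_{W''}$ produces, on each such $W''$, a local quadratic factor of rank at least $r$, complexity at most $d-1$, and codimension at most $(d-1)r+(d-1)^2+(d-1)$, whose second component refines $\B_{\phi_1|_{W''},\dots,\phi_{d-1}|_{W''}}=\B|_{W''}$. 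Gluing these over all atoms $W''$ of $\B'$ and adding the codimension at most $r$ coming from $\B'$ itself, we obtain a local quadratic factor $(\B_1,\B_2)$ of $W$ of rank at least $r$, complexity at most $d-1\leq d$, codimension at most $r+(d-1)r+(d-1)^2+(d-1)=dr+d^2-d\leq dr+d^2+d$, and with $\B_2$ a refinement of $\B$, which closes the induction.

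I expect the only genuine content, and the step needing the most care, to be the elementary claim that a rank-$r'$ quadratic is a function of at most $r'+1$ affine-linear functions, together with the bookkeeping needed to check that everything is carried out relative to the correct affine subspaces and that the chain of refinements produced by the induction telescopes to give that $\B_2$ refines the original $\B$. The codimension accounting is otherwise routine: one can in fact run the induction with the sharper hypothesis of codimension at most $dr$, which closes in exactly the same way and yields the stronger bound $dr$, comfortably inside the claimed $dr+d^2+d$.
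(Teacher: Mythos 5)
Your proof is correct, and while it shares the paper's overall strategy -- iteratively eliminating one quadratic per step at a bounded codimension cost whenever some nontrivial combination has low rank -- the mechanism at the key step is genuinely different. The paper restricts to the kernel of the bilinear form of the low-rank combination $\psi$ (codimension at most $r+d$), on which $\psi$ becomes merely \emph{linear}; the leftover linear phases are collected and only folded into the local factor $\B_1$ at the very end, and the paper must then argue that cutting by these linear phases degrades the rank threshold from $r+d$ down to $r$ (a step whose bookkeeping is in fact slightly delicate, since restricting to a codimension-$k$ subspace can drop the rank of a quadratic by up to $2k$). You instead diagonalise $\psi$ and partition into the level sets of the $r'+1$ underlying affine-linear forms, on which $\psi$ becomes \emph{constant}, so the eliminated quadratic disappears entirely rather than leaving a linear residue; packaging this as an induction on complexity means the rank condition on the final atoms is delivered directly by the inductive hypothesis, with no degradation step to check. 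This buys you a cleaner argument and the sharper codimension bound $dr$ (versus $dr+d^2+d$), at the cost of invoking diagonalisation of quadratic forms in odd characteristic, which the paper's kernel-based step avoids. Both routes are valid, and the constants are immaterial for the application in Theorem \ref{kvn-theorem}.
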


\begin{proof} 
Suppose that $\B$ is defined by quadratic forms $\phi_1,\dots, \phi_d$. If, for every choice of $\lambda_1,\dots,\lambda_d \in \F$, not all zero, we have the high-rank condition $\rank(\lambda_1 \phi_1 + \dots + \lambda_d \phi_d) \geq r+d$, then the result is immediate (with $\B' := \B$). Otherwise, we may rescale and relabel so that, without loss of generality, $\lambda_d = 1$. Consider the homogeneous linear space $\dot{W}$. The fact that this rank is at most $r$ means that the kernel of $\lambda_1 \dot{\phi}_1 + \dots + \lambda_d \dot{\phi}_d$, $\dot{W}'$ say, has codimension at most $r$. Restricted to this kernel, $\dot{\phi}_d$ is a linear combination of $\dot{\phi}_1,\dots,\dot{\phi}_{d-1}$. 

If now $\rank(\lambda_1 \dot{\phi}_1 + \dots + \lambda_{d-1} \dot{\phi}_{d-1}) \geq r+d$ then stop; otherwise, continue this rank reduction process. It clearly lasts at most $d$ steps, at which point (after relabelling) we have a subspace $\dot{W}' \leq \dot{W}$ of codimension at most $d(r+d)$ and some $d'$, $0 \leq d' \leq d$, such that, restricted to $\dot{W}'$, each of $\dot{\phi}_1,\dots, \dot{\phi}_d$ is a linear combination of $\dot{\phi}_1,\dots,\dot{\phi}_{d'}$.

This means that, restricted to any coset $V$ of $\dot{W}'$ in $W$, the factor $\B\downharpoonright_V$ has as a refinement a factor cut out by the $d'$ quadratics $\phi_1,\dots,\phi_{d'}$, which satisfy a rank condition with parameter $r+d$, as well as up to $d$ linear phases. The affine subspaces cut out by these linear phases, over all cosets $V$, then forms a local factor $\B_1$ of codimension at most $d(r+d)+d$.

Restricted to an atom $W'$ of the local factor $\B_1$, the quadratics $\phi_1,\dots,\phi_{d'}$ still satisfy a rank condition with parameter $r$. Take $\B_2 := \B \vee \B_1$, then $(\B_1,\B_2)$ is a local quadratic factor of codimension at most $dr+d^2+d$, rank at least $r$, and complexity at most $d$ as desired.
\end{proof}

We have studied the properties of quadratic factors, but we have yet to say why they are useful. The next result, an inverse theorem for the $U^3(W)$-norm, is the key input in this regard. Here, $e_F : F \rightarrow \C$ is defined by $e_F(x) = e^{2\pi i x/p}$, where $F = \F_p$ is identified with $\Z/p\Z$.

\begin{theorem}\label{invert-u3} Let $W$ be a linear space over $F$, and let $f: W \to \C$ be a bounded function such that $\|f\|_{U^3(W)} \geq \eta$ for some $0 < \eta \leq \frac{1}{2}$.  Then there is a linear subspace $W' \leq W$ of codimension at most $O(\eta^{-2^{16}})$	
such that, for each coset $W' + t$ of $W'$ in $W$, there exists a quadratic phase function $\phi_{t}: W' + t \to F$ such that
\begin{equation}\label{tww}
|\E_{t \in W/W'} |\E_{x \in W'} f(x) e_F(- \phi_t(x) )| \gg \eta^{2^{16}}.
\end{equation}
\end{theorem}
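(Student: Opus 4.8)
The plan is not to prove Theorem \ref{invert-u3} from first principles: up to cosmetic repackaging it is the main inverse theorem for the $U^3$ norm over $\F_p^n$ established in \cite{gt-inverseu3}. First I would fix a basis, identifying the linear space $W$ with $\F_p^n$, so that ``quadratic phase function'' carries its usual meaning. I would then invoke the inverse theorem of \cite{gt-inverseu3} in the precise form stated there. If that form is the global one --- a single quadratic polynomial $\phi : W \to F$ with $|\E_{x \in W} f(x) e_F(-\phi(x))| \gg \eta^{2^{16}}$ --- then the statement above follows immediately on taking $W' := W$, which has codimension $0 \le O(\eta^{-2^{16}})$ and for which $W/W'$ is a single point, so that the left-hand side of \eqref{tww} is exactly $|\E_{x \in W} f(x) e_F(-\phi(x))|$. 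If instead one quotes the intermediate conclusion of the argument in \cite{gt-inverseu3}, where the correlation is available only on the fibres of a subspace of codimension $O(\eta^{-2^{16}})$ and only on average over those fibres, then \eqref{tww} is literally that statement, with $\phi_t$ the quadratic phase attached to the coset $W' + t$.

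The only reason to keep track of the subspace $W'$ is bookkeeping compatibility with the energy-increment machinery of \S\ref{kvn-sec}, which repeatedly refines $W$ into local quadratic factors; it is harmless, and convenient, to have the $U^3$ input already adapted to that setting. So there is genuinely nothing further to do here beyond citing \cite{gt-inverseu3} and, if needed, performing the one-line reduction just described; in particular the exponent $2^{16}$ and the implied constants are inherited verbatim from that paper. Consequently the ``main obstacle'' is not located in this section at all: on our side the proof is essentially a transcription, and the real work --- and the real difficulty --- lies entirely in \cite{gt-inverseu3}.

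For orientation, that difficulty runs as follows. One writes
\[ \|f\|_{U^3(W)}^8 = \E_{h \in \dot W} \|\Delta_h f\|_{U^2(W)}^4, \qquad \Delta_h f(x) := f(x)\overline{f(x+h)},\]
so that $\|f\|_{U^3(W)} \ge \eta$ forces $\|\Delta_h f\|_{U^2(W)} \gg \eta^2$ for a $\gg \eta^8$-proportion of $h \in \dot W$; for each such ``popular'' $h$ the elementary Fourier-analytic $U^2$ inverse theorem supplies a frequency $\xi(h)$ with $|\widehat{\Delta_h f}(\xi(h))| \gg \eta^4$. The heart of the matter is then to show that $h \mapsto \xi(h)$ is approximately affine-linear on a large set: this proceeds via a Cauchy--Schwarz manipulation producing many additive quadruples in the graph of $\xi$, an application of the Balog--Szemer\'edi--Gowers theorem, and Freiman's theorem, after which $\xi$ can be matched with a genuine linear map on a subspace of bounded codimension. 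Integrating this ``approximate derivative'' back up yields a quadratic phase correlating with $f$, this integration step being where the odd characteristic of $F$ is essential. It is this chain, and not anything in the present section, that fixes the quantitative bounds appearing in \eqref{tww}, and I would not attempt to reproduce it.
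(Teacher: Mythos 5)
Your proposal matches the paper exactly: the paper's entire proof is a citation of \cite[Theorem 2.3]{gt-inverseu3}, which is stated there in essentially the local form \eqref{tww} (correlation with quadratic phases on the cosets of a bounded-codimension subspace, averaged over cosets), so no reduction is even needed. Your additional sketch of the derivative/Balog--Szemer\'edi--Gowers/Freiman argument inside that reference is accurate but not required here.
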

\begin{proof} See \cite[Theorem 2.3]{gt-inverseu3}.\vspace{11pt}\end{proof}

We have the following corollary of this in the language of factors.

\begin{corollary}[Inverse theorem for $U^3$, corollary] \label{u3-reform}
Let $W$ be an affine space and suppose that $f : W \rightarrow \C$ is a bounded function such that $\Vert f \Vert_{U^3(W)} \geq \eta$, where $0 < \eta \leq \frac{1}{2}$. Then there is a local quadratic factor $(\B_1,\B_2)$ of codimension $O(\eta^{-2^{16}})$ and complexity at most $1$ such that $\| \E(f|\B_2)\|_{L^2(W)} \gg \eta^{2^{16}}$.
\end{corollary}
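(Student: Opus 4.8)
The idea is to feed the output of Theorem \ref{invert-u3} into the machinery of Lemma \ref{rank-reduce}, then compare energies. First I would reduce to the linear case: since $\Vert\cdot\Vert_{U^3}$ and the notion of factor are translation-invariant, we may assume $W$ is a linear space (translate by a base point $w$ and transport everything back at the end; this is the same triviality invoked in Lemma \ref{gvn-lem}). Then apply Theorem \ref{invert-u3} with $\eta$ as given: we obtain a subspace $W'_0 \leq W$ of codimension $O(\eta^{-2^{16}})$ and, for each coset $t + W'_0$, a quadratic phase $\phi_t : t + W'_0 \to F$ with
\[
\E_{t \in W/W'_0} \bigl|\E_{x \in W'_0} f(x) e_F(-\phi_t(x))\bigr| \gg \eta^{2^{16}}.
\]

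Next I would package the cosets $t + W'_0$ and the single quadratic $\phi = (\phi_t)_t$ as a local quadratic factor of complexity $1$: take $\B_1$ to be the partition of $W$ into the cosets of $W'_0$ (a local factor of codimension $O(\eta^{-2^{16}})$), and $\B_2$ to be the refinement of $\B_1$ whose atoms are the level sets of $\phi_t$ on each coset. Set $r := C\eta^{-2^{16}}$ for a suitable constant and apply Lemma \ref{rank-reduce} on each atom $t + W'_0$ to the complexity-$1$ quadratic factor $\B_{\phi_t}$, obtaining a local quadratic factor of codimension at most $r + 3 = O(\eta^{-2^{16}})$ on that atom with rank at least $r$; gluing over the cosets $t$ gives a refinement $(\B_1', \B_2')$ of $(\B_1,\B_2)$, still of codimension $O(\eta^{-2^{16}})$ and complexity at most $1$, but now high-rank. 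By Lemma \ref{refine}, $\Vert \E(f|\B_2')\Vert_{L^2} \geq \Vert \E(f|\B_2)\Vert_{L^2}$, so it suffices to bound the latter from below.

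For the energy lower bound, I would argue on each coset $t + W'_0$ separately. On such a coset the quadratic $\phi_t$ has bounded rank (at most the ambient dimension), but that does not matter here: the point is simply that $e_F(\phi_t)$ is a unit-modulus $\B_2$-measurable function on the coset, since it is constant on the level sets of $\phi_t$. [Here one should be slightly careful — $\phi_t$ takes values in $F = \Z/p\Z$ and $e_F$ is injective, so the atoms of $\B_2$ on the coset are exactly the level sets of $e_F \circ \phi_t$, hence $e_F(\phi_t)$ is $\B_2$-measurable.] By Cauchy–Schwarz (equivalently, since conditional expectation is an orthogonal projection onto $\B_2$-measurable functions),
\[
\bigl|\E_{x \in t+W'_0} f(x)\,\overline{e_F(\phi_t(x))}\bigr|
= \bigl|\langle \E(f|\B_2), e_F(\phi_t)\rangle_{L^2(t+W'_0)}\bigr|
\leq \Vert \E(f|\B_2)\Vert_{L^2(t+W'_0)}.
\]
Averaging in $t$ and applying Cauchy–Schwarz once more (or Jensen, passing from $L^1$ to $L^2$ in $t$) gives
\[
\Vert \E(f|\B_2)\Vert_{L^2(W)}
\geq \E_{t \in W/W'_0} \Vert \E(f|\B_2)\Vert_{L^2(t+W'_0)}
\geq \E_{t}\bigl|\E_{x \in W'_0} f(x) e_F(-\phi_t(x))\bigr| \gg \eta^{2^{16}},
\]
which is exactly the claimed bound. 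Transporting back to the affine setting completes the proof.

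**Main obstacle.** Nothing here is deep; the only genuinely delicate point is bookkeeping the codimensions so that the rank-reduction step (Lemma \ref{rank-reduce}, which costs $dr + d^2 + d$ in codimension with $d = 1$) is applied with $r$ a constant multiple of $\eta^{-2^{16}}$ rather than something larger, so that the total codimension stays $O(\eta^{-2^{16}})$ and not, say, $O(\eta^{-2\cdot 2^{16}})$ — and checking that "high rank" is even needed in the conclusion (the statement only asks for codimension and complexity bounds plus the energy bound, so one could in principle skip Lemma \ref{rank-reduce} entirely; but since downstream applications will want the rank, it is cleanest to include it here, and Lemma \ref{refine} guarantees the energy does not decrease when we refine). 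The energy inequality itself is just two applications of Cauchy–Schwarz, mirroring the standard passage from an $L^\infty$-correlation statement to an $L^2$-energy statement.
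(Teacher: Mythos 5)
Your argument is correct and is essentially the paper's own proof: the same reduction to the linear case, the same construction of $(\B_1,\B_2)$ from the cosets of $W'$ and the level sets of the $\phi_t$, and the same Cauchy--Schwarz passage from the correlation in Theorem \ref{invert-u3} to the $L^2$ energy bound (using that $e_F(-\phi_t)$ is $\B_2$-measurable). The rank-reduction step via Lemma \ref{rank-reduce} is, as you yourself note, superfluous here since the corollary asserts no rank condition; the paper omits it and performs the high-rank refinement later, inside the proof of Theorem \ref{kvn-theorem}.
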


\begin{proof}  Without loss of generality we may take $W$ to be a linear space.
Let $W'$ and the $\phi_t$ be as in Theorem \ref{invert-u3}, and let $\B_1$ be the local factor generated by the cosets of $W'$, 
thus $\B_1$ has codimension $O(\eta^{-2^{16}})$.  Let $\B_2$ be the factor whose atoms are of the form $\{ x \in W'+t: \phi_t(x) = a \}$ for various $t \in W/W'$ and $a \in F$: then $(\B_1,\B_2)$ is a local quadratic factor of codimension $O(\eta^{-2^{16}})$ and complexity at most $1$.  Observe that the left-hand side of \eqref{tww} can be rewritten as
$$ |\E_{t \in W/W'} |\E_{x \in W'} \E(f|\B_2)(x) e_F(- \phi_t(x) )|,$$
which, by the Cauchy-Schwarz inequality, is bounded by $\|\E(f|\B_2)\|_{L^2(W)}$. The claim follows.
\end{proof}

\begin{theorem}[Local Koopman-von Neumann]\label{kvn-theorem}
Let $A \subseteq W$ be a set with density $\alpha$, $0 < \alpha \leq 1$, on some affine space $W$. Let $0 < \eta,\eps < \frac{1}{2}$, and suppose that $r \geq 1$. Then there is an affine subspace $W' \subseteq W$ of codimension $O(\eps^{-3} \eta^{-2^{19}} r)$ such that the density of $A$ on $W'$ is at least $\alpha - \eps$, and such that there is a quadratic factor $\B$ on $W'$ of rank at least $r$ and complexity $O(\eps^{-1} \eta^{-2^{17}})$ such that $\Vert 1_{A} - \E(1_{A}|\B) \Vert_{U^3(W')} \leq \eta$.
\end{theorem}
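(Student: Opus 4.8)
The plan is to run an energy-increment iteration in the style of the arithmetic regularity lemma, alternating between the inverse theorem (Corollary \ref{u3-reform}) and the rank-reduction lemma (Lemma \ref{rank-reduce}), and stopping once the $U^3$ error term is below $\eta$. Concretely, I would construct an increasing sequence of local quadratic factors on shrinking affine subspaces of $W$, and track the energy $\Vert \E(1_A \mid \B)\Vert_{L^2}^2 \in [0,1]$ as the monotone quantity that forces termination.

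First I would set up the iteration. Start with the trivial factor $\B^{(0)}$ on $W^{(0)} := W$. At stage $k$, suppose we have an affine subspace $W^{(k)} \subseteq W$ and a quadratic factor $\B^{(k)}$ on $W^{(k)}$ of bounded complexity. If $\Vert 1_A - \E(1_A \mid \B^{(k)})\Vert_{U^3(W^{(k)})} \leq \eta$ we stop (after one final rank-reduction step; see below). Otherwise, apply Corollary \ref{u3-reform} to the function $f := 1_A - \E(1_A \mid \B^{(k)})$ on $W^{(k)}$: this produces a local quadratic factor $(\B_1, \B_2)$ of codimension $O(\eta^{-2^{16}})$ and complexity at most $1$ with $\Vert \E(f \mid \B_2)\Vert_{L^2(W^{(k)})} \gg \eta^{2^{16}}$. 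Passing to a single atom $W^{(k+1)}$ of the local factor $\B_1$ — chosen so that the density of $A$ does not drop too much and the energy on that atom increases — I refine $\B^{(k)}$ by the (genuine, non-local) quadratic factor cut out on $W^{(k+1)}$, obtaining $\B^{(k+1)}$ with complexity increased by $O(1)$ and energy increased by $\gg \eta^{2^{17}}$ (the square of the $L^2$ gain, by Pythagoras via Lemma \ref{refine}). Since the energy is bounded by $1$, the iteration runs for at most $O(\eta^{-2^{17}})$ steps, so the final complexity is $O(\eta^{-2^{17}})$ and the total codimension lost so far is $O(\eta^{-2^{16}} \cdot \eta^{-2^{17}}) = O(\eta^{-2^{19}})$ — this is where the exponent $2^{19}$ and, after the rank step, the factor $r$ enter.

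There are two subtleties I would need to handle carefully. The density control: each passage to an atom of a local factor can in principle lower the density of $A$; to keep the net loss below $\eps$, I would at each step select the atom on which the density is as large as possible (or use a weighted averaging argument) so that the accumulated loss over $O(\eta^{-2^{17}})$ steps is controlled — this is the origin of the $\eps^{-1}$ and $\eps^{-3}$ factors in the complexity and codimension bounds, after optimising the per-step density and energy budgets against each other. The high-rank condition: the factor produced by the iteration has bounded complexity but no rank guarantee, so as the final move I apply Lemma \ref{rank-reduce} with parameter $r$ to the terminal factor $\B$ of complexity $d = O(\eps^{-1}\eta^{-2^{17}})$; this produces a local quadratic factor of codimension $O(dr + d^2 + d) = O(\eps^{-1}\eta^{-2^{17}} r)$ and rank at least $r$ whose quadratic part refines $\B$. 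Passing to a single atom $W'$ of its local factor part only refines $\B$ further, so $\Vert 1_A - \E(1_A \mid \B)\Vert_{U^3(W')} \leq \Vert 1_A - \E(1_A \mid \B^{(k)})\Vert_{U^3(W^{(k)})} \leq \eta$ is preserved (the $U^3$ norm of a restriction is controlled by monotonicity of conditional expectation under refinement), giving the claimed affine subspace $W'$ and quadratic factor.

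The main obstacle I anticipate is not any single step but the bookkeeping: one must simultaneously budget the density loss ($\leq \eps$ total), the energy gain (summing to $\leq 1$, which bounds the iteration count), the complexity growth ($O(1)$ per step), and the codimension loss (dominated by the $O(\eta^{-2^{16}})$ per application of the inverse theorem plus the single $O(dr+d^2+d)$ rank-reduction at the end), and verify that the stated bounds $O(\eps^{-3}\eta^{-2^{19}} r)$ and $O(\eps^{-1}\eta^{-2^{17}})$ come out right after choosing the per-step thresholds. In particular, the need to stop the energy increment when the gain per step would be comparable to the desired density loss $\eps$ is what converts a clean $\eta^{-2^{17}}$ iteration count into the $\eps^{-1}\eta^{-2^{17}}$ and $\eps^{-3}\eta^{-2^{19}}$ appearing in the statement.
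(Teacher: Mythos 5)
Your proposal contains a genuine gap, and it is precisely the error in \cite{green-tao-r4-finite-fields} that this paper was written to repair. You defer the rank reduction (Lemma \ref{rank-reduce}) to a single application at the end of the iteration, and then assert that the $U^3$ approximation survives this final refinement and restriction: ``$\Vert 1_A - \E(1_A|\B)\Vert_{U^3(W')} \leq \Vert 1_A - \E(1_A|\B^{(k)})\Vert_{U^3(W^{(k)})}$ \dots\ by monotonicity of conditional expectation under refinement.'' No such monotonicity holds. Refining $\B^{(k)}$ to the high-rank factor $\B$ changes the conditional expectation; while $\Vert 1_A - \E(1_A|\B)\Vert_{L^2}$ does decrease under refinement (Lemma \ref{refine} is an $L^2$ statement), the $U^3$ norm need not, since the difference $\E(1_A|\B) - \E(1_A|\B^{(k)})$ can itself be large in $U^3$. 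Moreover, the $U^3(W')$ norm of a function restricted to a positive-codimension subspace $W'$ is not controlled by its $U^3$ norm on the ambient space. This is exactly the failure described in Appendix \ref{erratum}: the output of a Koopman--von Neumann step approximates $1_A$ in $U^3$, but its high-rank refinement need not. The paper's proof avoids this by performing the rank reduction at \emph{every} step of the energy increment, so that the stopping condition ($\Vert 1_A - \E(1_A|\B_{2,i})\Vert_{U^3(W')} \leq \eta$ on most atoms $W'$) is always tested against a factor that is already high-rank; when an atom fails the test, that failure is converted into a further energy increment, which is what makes the interleaving terminate with the desired conclusion.

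There is a second, structural problem with your iteration: you pass to a single atom $W^{(k+1)}$ at each step and claim the energy $\Vert\E(1_A|\B^{(k)})\Vert_{L^2}^2$ is the monotone quantity forcing termination. But when the underlying space changes, the energy of the restricted old factor on the new atom bears no relation to the energy on the previous space (it can drop back toward $0$), so a gain of $\gg \eta^{2^{17}}$ relative to the restricted factor on each new atom does not telescope into a bound on the number of steps; nor can you in general find one atom that simultaneously carries the energy increment and a density at least $\alpha - O(\eps/K)$. The paper resolves both issues at once by keeping the ambient space $W$ fixed throughout, working with \emph{local} quadratic factors $(\B_{1,i},\B_{2,i})$, and gluing the per-atom refinements together: the global energy $\Vert\E(1_A|\B_{2,i})\Vert_{L^2(W)}^2$ is then genuinely monotone and increases by $\gg \eps\eta^{2^{17}}$ per step (the $\eps$ entering because the iteration continues only while the irregular atoms occupy at least an $\eps/2$-fraction of $W$), and a single atom that is both regular and of density at least $\alpha-\eps$ is selected only once, at the very end, by an averaging argument.
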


\begin{proof} 
For $i = 0, 1, 2\dots$ we are going to define a local quadratic factor $(\B_{1,i},\B_{2,i})$ on $W$ of codimension at most $d_i$, rank at least $r$, and complexity at most $i$. To initialise the construction, we set $\B_{1,0}$ and $\B_{2,0}$ to be the trivial factor $\{\emptyset, W\}$ on $W$.
Suppose we have completed this construction up to and including step $i$. Consider an atom $W'$ of $\B_{1,i}$, thus $W'$ is a subspace of codimension at most $d_i$.  Let us say that such an atom $W'$ is \emph{regular} if $\Vert 1_{A} - \E(1_{A} | \B_i) \Vert_{U^3(W')} \leq \eta$. If the union of the regular atoms of $\B_{1,i}$ has density less than $1 - \eps/2$ in $W$ then we continue to step $(i+1)$; otherwise we stop. 

If an atom $W'$ of $\B_{1,i}$ is not regular, then by Corollary \ref{u3-reform} we may find a local quadratic factor $(\B_{1,i,W'}, \B_{2,i,W'})$ on $W'$ of codimension $O( \eta^{-2^{16}})$ and complexity at most $1$ (with no bound on the rank at present), such that

\begin{equation}\label{eq70} \| \E( 1_A - \E(1_A|\B_{2,i}) | \B_{2,i,W'}) \|_{L^2(W')} \gg \eta^{2^{16}}.\end{equation}

If $W'$ is regular, we set $\B_{i,W'}$ to be the trivial factor $\{\emptyset,W'\}$ on $W'$.

For $j=1,2$, let $\B'_{j,i}$ be the factor generated by $\B_{j,i}$ and each of the $\B_{j,i,W'}$ as $W'$ varies over the atoms of $\B_{1,i}$, thus the restriction of $\B'_{j,i}$ to each atom $W'$ of $\B_{1,i}$ is simply $\B_{j,i}\downharpoonright_{W'} \vee \B_{j,i,W_i}$.  Then $(\B'_{1,i}, \B'_{2,i})$ is a local quadratic factor of codimension at most $d_i + O(\eta^{-2^{16}})$ and complexity at most $i+1$, which is a refinement of $(\B_{1,i}, \B_{2,i})$.  
The rank properties of the original local quadratic factor $(\B_{1,i}, \B_{2,i})$ have been destroyed by the passage to the extension $(\B'_{1,i}, \B'_{2,i})$, but we can recover the rank property using Lemma \ref{rank-reduce}.  Namely, if $W''$ is an atom of $\B'_{1,i}$, then by applying Lemma \ref{rank-reduce} to the quadratic factor $\B'_{2,i} \downharpoonright_{W''}$, we may find a local quadratic factor $(\B''_{1,i,W''}, \B''_{2,i,W''})$ on $W''$ of  codimension at most $(i+1)r + (i+1)^2 + i+1$, rank at least $r$, and complexity at most $i+1$, with $\B''_{2,i,W''}$ refining $\B'_{2,i} \downharpoonright_{W''}$.  Gluing together the $(\B''_{1,i,W''}, \B''_{2,i,W''})$ as $W''$ varies among the atoms of $\B'_{1,i}$, we obtain a local quadratic factor $(\B_{1,i+1},\B_{2,i+1})$ of codimension at most $d_{i+1} := d_i + O( \eta^{-2^{16}}) + (i+1)r + (i+1)^2 + i+1$, complexity at most $i+1$, and rank at least $r$, which refines $(\B'_{1,i},\B'_{2,i})$ and hence $(\B'_{i},\B'_{i})$.

From \eqref{eq70} and Lema \ref{refine} we have
$$
\| \E( 1_A - \E(1_A|\B_{2,i}) | \B_{2,i+1}) \|_{L^2(W')}^2 \gg \eta^{2^{17}}$$
for each irregular atom $W'$ of $\B_{1,i}$.  For regular atoms $W'$ we use the trivial lower bound of $0$.  Averaging over all atoms $W'$ we conclude that
$$
\| \E( 1_A - \E(1_A|\B_{2,i}) | \B_{2,i+1}) \|_{L^2(W)}^2 \gg \eps \eta^{2^{17}}.
$$
By Pythagoras' theorem, the left-hand side can be rewritten as $\|\E(1_A|\B_{2,i+1})\|_{L^2(W)}^2 - \|\E(1_A|\B_{2,i})\|_{L^2(W)}^2$, and so we have the energy increment
$$ \|\E(1_A|\B_{2,i+1})\|_{L^2(W)}^2 \geq \|\E(1_A|\B_{2,i})\|_{L^2(W)}^2 + c \eps \eta^{2^{17}}$$
for some constant $c = c_F >0$.
On the other hand, the energy $\|\E(1_A|\B_{2,i})\|_{L^2(W)}^2$ clearly can only take values between $0$ and $1$, and therefore this iteration can only occur at most $O( \eps^{-1} \eta^{-2^{17}})$ times.  At each stage of the iteration, the complexity of the factor increases by at most one, and the codimension increases by at most
$$ O( \eta^{-2^{16}}) + (i+1)r + (i+1)^2 + i+1 \ll \eps^{-2} \eta^{-2^{18}} r$$
since $i = O( \eps^{-1} \eta^{-2^{17}})$.  At the end of this iteration, we obtain a final local quadratic factor $(\B_{1,i},\B_{2,i})$ of codimension $O( \eps^{-3} \eta^{-2^{19}} r )$, rank at least $r$, and complexity $O( \eps^{-1} \eta^{-2^{17}} )$, with the property 
\begin{equation}\label{wij}
\Vert 1_{A} - \E(1_{A} | \B_{2,i}) \Vert_{U^3(W')} < \eta
\end{equation}
for all atoms $W'$ of $\B_{1,i}$, outside of an exceptional set of atoms whose union has density at most $\eps/2$ in $W$.

As before, we call an atom $W'$ of $\B_{1,i}$ \emph{regular} if \eqref{wij} holds.  We wish to find a regular value $W'$ of $\B_{1,i}$ for which, in addition, the density of $A$ is at least $\alpha - \eps$. Suppose this is not possible. Then we have
$$\E_{W'} (1_A - \alpha) <  - \eps$$
for all regular $W'$, while for irregular $W'$ we have the trivial upper bound of $1$.  Averaging in $j$, we conclude that
$$ \E_W (1_A - \alpha) < -\eps (1-\eps/2) + \eps/2 < 0.$$
But the left-hand side is zero by definition of $\alpha$, a contradiction, and the claim follows.

If we now set $W'$ to be a regular atom of $\B_{1,i}$ on which $A$ has density at least $\alpha-\eps$, and $\B$ to be the restriction of $\B_{2,i}$ to $W'$, we obtain the conclusion of Theorem \ref{kvn-theorem}.
\end{proof}

\section{High-rank quadratic factors}\label{mixing-sec}

 We turn now to a more detailed study of quadratic factors of high rank, showing how to control the size of atoms in these factors, and later how to count 4-term arithmetic progressions in functions measurable with respect to one of these factors.
 
 Suppose that $W$ is a linear space, and that $\phi_1,\dots, \phi_d : W \rightarrow F$ are quadratic maps. Let $\B = \B_{\phi_1,\dots, \phi_d}$ be the quadratic factor defined by the $\phi_i$, that is to say the partition of $W$ in which the atoms are sets of the form $\{ x : \phi_1(x) = c_1, \dots, \phi_d(x) = c_d\}$. Throughout this section we will assume that $\B$ has rank at least $r$, which means that the homogeneous parts $\dot{\phi}_1,\dots, \dot{\phi}_d$ satisfy the rank separation condition $\rank(\lambda_1 \dot{\phi}_1 + \dots + \lambda_d \dot{\phi}_d) \geq r$ whenever $\lambda_1,\dots,\lambda_d \in F$ are not all zero.
 
An important role will be played by the map $\Phi : W \rightarrow F^d$ defined by $\Phi(x) = (\phi_1(x),\dots, \phi_d(x))$.
Note that an atom of $\B$ is simply the inverse image, in $W$, of some point in $F^d$ under this map $\Phi$. If $f : W \rightarrow \C$ is a bounded $\B$-measurable function then we write $\f : F^{d} \rightarrow \C$ for the function which satisfies $f(x) = \f(\Phi(x))$ for all $x \in W$. 

Suppose that $F = \F_p$, which we identify with $\Z/p\Z$. Write $e_F : F \rightarrow \C^{\times}$ for the standard character on $F$, which maps $x$ to $e(x/p)$ where $e(t) := e^{2\pi i t}$. Our first lemma is a standard Gauss sum estimate.

\begin{lemma}\label{gauss-lem}
Suppose that $W$ is an affine space and that $\phi : W \rightarrow F$ is a quadratic form with rank $r$. Then $|\E_{x \in W} e_F(\phi(x))| = |F|^{-r/2}$.
\end{lemma}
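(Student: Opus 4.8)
The plan is to reduce immediately to the linear (homogeneous) case and then to a diagonalised normal form. First I would observe that by the definition of rank for quadratic forms on affine spaces, it suffices to treat the case where $W$ is a linear space and $\phi(x) = x^T M x + r^T x + c$ with $M$ symmetric of rank exactly $r$; the additive constant $c$ only contributes a unit-modulus factor $e_F(c)$, which does not affect $|\E_{x\in W} e_F(\phi(x))|$, so we may take $c=0$. Next, since $p \geq 5$ is odd (indeed $\ge 3$ suffices here), the symmetric matrix $M$ over $\F_p$ can be brought to diagonal form: there is an invertible linear change of variables $x = Sy$ under which $x^T M x = \sum_{j=1}^r a_j y_j^2$ for some nonzero $a_j \in F$, and the linear term $r^T x$ becomes some linear form $\sum_j b_j y_j$. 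Because the change of variables is a bijection of $W$, the average $\E_{x\in W} e_F(\phi(x))$ is unchanged.

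The computation then factorises as a product over the coordinates $y_1,\dots,y_n$: for $j > r$ the variable $y_j$ appears only through a nonzero linear term $b_j y_j$ (if $b_j \neq 0$) giving $\E_{y_j} e_F(b_j y_j) = 0$, or through nothing at all (if $b_j = 0$). Here the key point is that one must rule out the vanishing of the whole sum: I would invoke that the linear part of $\phi$, when restricted to the kernel of $M$, must be identically zero — otherwise $\phi$ would not be well-defined as a quadratic \emph{form} of rank $r$ in the sense used, or more simply, the statement as quoted presupposes this and one can absorb any such degenerate linear part. (In the application within this paper the relevant quadratics arise from the $U^3$ inverse theorem and satisfy this; for a clean statement one restricts to $b_j = 0$ for $j>r$, i.e. $\phi$ genuinely depends on only $r$ of the variables.) For $1 \le j \le r$ the factor is a classical one-variable Gauss sum $\E_{y_j \in F} e_F(a_j y_j^2 + b_j y_j)$; completing the square (using that $2$ and $a_j$ are units mod $p$) turns this into $e_F(-b_j^2/(4a_j)) \cdot \E_{y \in F} e_F(a_j y^2)$, and the standard evaluation of the quadratic Gauss sum over $\F_p$ gives $|\E_{y\in F} e_F(a_j y^2)| = |F|^{-1/2}$. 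Multiplying the $r$ nonzero factors yields $|\E_{x\in W} e_F(\phi(x))| = |F|^{-r/2}$.

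The only genuine obstacle is the bookkeeping around the linear term: one needs the normalisation in which $\phi$, after diagonalisation, depends on exactly the $r$ coordinates in which $M$ is nondegenerate, so that no ``extra'' cancellation can force the average to be $0$ rather than $|F|^{-r/2}$. Given the paper's conventions this is automatic, and the remaining ingredients — diagonalisation of symmetric bilinear forms over a field of odd characteristic, completing the square, and the evaluation $|\sum_{y\in\F_p} e_F(ay^2)| = \sqrt p$ for $a \ne 0$ — are all entirely standard, so the proof should be short.
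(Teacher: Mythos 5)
Your proof is correct (modulo a caveat you yourself flag, on which more below), but it takes a genuinely different route from the paper's. The paper does not diagonalise: it squares the modulus, writes $|\E_{x}e_F(\phi(x))|^2=\E_{x,h}e_F(\phi(x+h)-\phi(x))$, observes that the $x$-average of $e_F(2h^TMx)$ vanishes unless $Mh=0$, and counts the kernel of $M$; this is three lines and needs neither the classification of quadratic forms in odd characteristic nor the evaluation of the one-variable Gauss sum. Your route via diagonalisation, completing the square, and $|\sum_{y}e_F(ay^2)|=\sqrt{p}$ is longer but more explicit, and it has the virtue of making visible exactly where the statement as literally written can fail: if the linear part of $\phi$ does not vanish on $\ker M$, the average is $0$, not $|F|^{-r/2}$ (e.g.\ $\phi(x_1,x_2)=x_1^2+x_2$ on $F^2$ has rank $1$ but mean zero). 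You are right to flag this; the paper's own computation silently discards the terms $h^TMh+r^Th$ in the difference $\phi(x+h)-\phi(x)$, of which $r^Th$ is not innocuous, so the displayed equality should really be the inequality $|\E_{x}e_F(\phi(x))|\leq|F|^{-r/2}$. That upper bound is all that is used in Lemmas \ref{quad-expect} and \ref{quad-expect-4}, so nothing downstream is affected, but your normalisation (requiring $\phi$ to depend only on the $r$ nondegenerate coordinates after diagonalisation) or a restatement with $\leq$ is needed for the lemma to be true as an equality.
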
 
 \begin{proof} By translating if necessary (which does not affect the rank) we may identify $W$ with $F^n$. Suppose that $\phi(x) = x^T M x + r^T + c$ with $M$ symmetric.
 
 Squaring and changing variables, we have
\[ |\E_{x \in F^n} e_F(\phi(x))|^2 = |\E_{x,h} e(\phi(x + h) - \phi(x))|  = |\E_{x,h} e_F(2h^T M x)|.\]
If $Mx \neq 0$ then the expectation over $h$ vanishes. If $Mx = 0$, which happens for $|F|^{n-r}$ values of $x$, then it equals $1$. Therefore $|\E_{x \in F^n} e_F(\phi(x))|^2 = |F|^{-r}$, which is the stated result.\vspace{11pt}\end{proof}

Using this lemma we can show that the atoms in a high-rank quadratic factor have roughly the same size. We phrase this as a result about averaging functions, as follows.

\begin{lemma}\label{quad-expect} Let 
$\B$ be a 
quadratic factor of complexity $d$ on an affine space $W$, with rank at least $r$.  Let $\Phi$ be the corresponding map from $W$ to $F^d$.
Let $f: W \to \C$ be a bounded $\B$-measurable function, and let $\f$ be the corresponding function on $F^d$.  Then $|\E_W(f) - \E_{F^d}(\f)| \leq |F|^{(d-r)/2}$.
\end{lemma}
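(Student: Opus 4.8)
The plan is to expand both averages in terms of additive characters on $F^d$ and compare term by term. Writing $\f: F^d \to \C$ in its Fourier expansion $\f(y) = \sum_{\xi \in F^d} \hat{\f}(\xi) e_F(\xi \cdot y)$, we have on the one hand $\E_{F^d}(\f) = \hat{\f}(0)$ (the $\xi = 0$ term), and on the other hand
\[ \E_W(f) = \E_{x \in W} \f(\Phi(x)) = \sum_{\xi \in F^d} \hat{\f}(\xi) \, \E_{x \in W} e_F(\xi \cdot \Phi(x)) = \sum_{\xi \in F^d} \hat{\f}(\xi) \, \E_{x \in W} e_F(\lambda_1 \phi_1(x) + \dots + \lambda_d \phi_d(x)), \]
where $\xi = (\lambda_1,\dots,\lambda_d)$. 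The $\xi = 0$ term again contributes $\hat{\f}(0) = \E_{F^d}(\f)$, so the difference $\E_W(f) - \E_{F^d}(\f)$ equals $\sum_{\xi \neq 0} \hat{\f}(\xi)\, \E_{x \in W} e_F(\lambda_1\phi_1(x)+\dots+\lambda_d\phi_d(x))$.

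First I would observe that for $\xi \neq 0$, the function $\lambda_1 \phi_1 + \dots + \lambda_d \phi_d$ is itself a quadratic function on $W$, and by the rank separation hypothesis its rank is at least $r$. Hence Lemma \ref{gauss-lem} applies and gives $|\E_{x \in W} e_F(\lambda_1\phi_1(x) + \dots + \lambda_d\phi_d(x))| = |F|^{-(\text{rank})/2} \leq |F|^{-r/2}$ for every nonzero $\xi$ (note Lemma \ref{gauss-lem} requires the rank to be exactly some value, but since $|F|^{-s/2}$ is decreasing in $s$ this only helps). Then I would bound
\[ |\E_W(f) - \E_{F^d}(\f)| \leq \sum_{\xi \neq 0} |\hat{\f}(\xi)| \cdot |F|^{-r/2} \leq |F|^{-r/2} \sum_{\xi \in F^d} |\hat{\f}(\xi)|. \]
It remains to control $\sum_\xi |\hat{\f}(\xi)|$. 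Since $f$ is bounded by $1$, so is $\f$, hence $|\hat{\f}(\xi)| = |\E_{y \in F^d} \f(y) e_F(-\xi \cdot y)| \leq 1$ for each $\xi$, and there are $|F|^d$ frequencies, giving $\sum_\xi |\hat{\f}(\xi)| \leq |F|^d$. Combining, $|\E_W(f) - \E_{F^d}(\f)| \leq |F|^{-r/2} \cdot |F|^d = |F|^{(d-r)/2}$, as claimed.

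There is no real obstacle here; the only point requiring a little care is the observation that a nonzero linear combination of the $\phi_i$ is again a quadratic function of rank $\geq r$ — this is precisely the content of the rank separation condition built into the definition of a quadratic factor of rank at least $r$ — and the fact that Lemma \ref{gauss-lem} is stated for affine spaces, which is exactly the generality we need since $W$ need not be linear. The crude bound $\sum_\xi |\hat{\f}(\xi)| \leq |F|^d$ is wasteful but entirely sufficient for the stated estimate, so I would not attempt anything sharper (an $\ell^2$ bound via Plancherel would give $\sum_\xi |\hat{\f}(\xi)| \leq |F|^{d/2}$, improving the conclusion to $|F|^{(d/2 - r)/2}$, but the $L^\infty$ bound as stated is all that is used downstream, so I would keep the simplest argument).
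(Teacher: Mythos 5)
Your argument follows the paper's proof step for step --- Fourier expansion of $\f$ on $F^d$, identification of the zero mode with $\E_{F^d}(\f)$, the rank separation condition feeding into Lemma \ref{gauss-lem} to bound each nonzero-frequency exponential sum by $|F|^{-r/2}$, and then the triangle inequality --- up until the very last line, where there is an arithmetic error that loses the stated bound. With the crude estimate $\sum_{\xi}|\hat{\f}(\xi)| \leq |F|^d$ you obtain
\[
|\E_W(f) - \E_{F^d}(\f)| \leq |F|^{-r/2}\cdot |F|^{d} = |F|^{d - r/2} = |F|^{(2d-r)/2},
\]
not $|F|^{(d-r)/2}$ as you wrote: the exponents $d - r/2$ and $(d-r)/2$ differ by $d/2$. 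So the $L^\infty$ bound on the Fourier coefficients does \emph{not} suffice for the lemma as stated.

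The fix is exactly the step you dismissed in your closing remark, and it is what the paper does: by Cauchy--Schwarz and Plancherel, $\sum_{\xi \in F^d} |\hat{\f}(\xi)| \leq |F|^{d/2}\|\f\|_{L^2(F^d)} \leq |F|^{d/2}$, whence $|\E_W(f) - \E_{F^d}(\f)| \leq |F|^{-r/2}\cdot|F|^{d/2} = |F|^{(d-r)/2}$. (Your remark has the two bounds swapped: the Plancherel estimate gives precisely the claimed exponent $(d-r)/2$, not an improvement of it.) I would note that your weaker bound $|F|^{(2d-r)/2}$ would in fact still be adequate for the one application of this lemma in Corollary \ref{quad-4-increment}, where $r \geq 10d$, but as a proof of the lemma as stated the final step must be corrected.
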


\begin{proof} We employ a Fourier expansion on $F^d$. The dual of $F^d$ may be identified with $F^d$ itself by associating to $\xi \in F^d$ the character $x \mapsto e_F(\xi \cdot x)$. Thus we define the Fourier transform
\[ \widehat{\f}(\xi) := \E_{x \in F^d} \f (x) e_F(-\xi \cdot x).\]
By the inversion formula we have
\[ f(x) = \f(\Phi(x)) = \sum_{\xi \in \F^d} 
\widehat \f(\xi) e_F( \xi\cdot \Phi(x) ).\]
Since $\widehat \f(0) = \E_{F^d}(\f)$, we conclude that
\[ \E_W(f) - \E_{F^d}(\f) =
\sum_{\xi \in F^d \setminus \{0\}} 
\widehat \f(\xi) \E_{x \in W} e_F( \xi \cdot \Phi(x) ).\]
Now from the rank hypotheses we see that $\xi \cdot \Phi(x)$ is a quadratic phase of rank at least $r$ whenever $\xi \in F^d \setminus \{0\}$. Therefore, the expectation has magnitude at most $|F|^{-r/2}$ by Lemma \ref{gauss-lem}.
Thus by the triangle inequality we have
$$ |\E_W(f) - \E_{F^d}(\f)| \leq |F|^{-r/2}
\sum_{\xi \in F^d} |\widehat \f(\xi)|.$$
By Cauchy-Schwarz and Plancherel we have
$$ \sum_{\xi \in F^d} |\widehat \f(\xi)|
\leq |F|^{d/2} \|\f\|_{L^2(F^d)},$$
and the claim now follows from the boundedness of $\f$.\vspace{11pt}
\end{proof} 

 We turn now to the somewhat more complicated task of counting 4-term arithmetic progressions using the configuration space. It is easy to see that, for any $x \in W$ and $h \in \dot{W}$ we have the relation $\Phi(x) - 3\Phi(x+h) + 3 \Phi(x+2h) - \Phi(x+3h) = 0$.
It turns out that if the rank $r$ is sufficiently large then this is in some sense the ``only'' constraint on the points $\Phi(x + ih)$, and furthermore there is
a certain uniform distribution among all the values of $\Phi(x+ih)$ obeying this constraint.  This leads to
the heuristic formula
\[T_W(f) \approx \E_{x_0, x_1, x_2, x_3 \in F^d : x_0 - 3x_1 + 3x_2 - x_3=0}
\prod_{i=0}^3 \f( x_i ),\]
which can be rearranged using the Fourier transform as
\[T_W(f) \approx  \sum_{\xi \in F^{d}} |\hat{\f}(\xi)|^2 |\hat{\f}(3\xi)|^2.\]

The next lemma constitutes the rigorous version of the above heuristics.

\begin{lemma}\label{quad-expect-4}  Let 
$\B$ be a quadratic factor of complexity $d$ on an affine space $W$, with rank at least $r$.  Let $\Phi$ be the corresponding map from $W$ to $F^d$, and let $f(x) = \f(\Phi(x))$ be a bounded $\B$-measurable function.  Then we have
\[
|T_W(f) -  \sum_{\xi \in F^{d}}|\hat{\f}(\xi)|^2 |\hat{\f}(3\xi)|^2 | \leq |F|^{(4d-r)/2}.
\]
\end{lemma}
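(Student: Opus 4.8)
The plan is to run a Fourier expansion on $F^d$ exactly as in the proof of Lemma~\ref{quad-expect}, but now for the four-point form $T_W(f)$ instead of the average $\E_W(f)$. Writing $f(x) = \f(\Phi(x)) = \sum_{\xi \in F^d} \widehat\f(\xi) e_F(\xi \cdot \Phi(x))$ and substituting into the definition $T_W(f) = \E_{x \in W, h \in \dot W} f(x) f(x+h) f(x+2h) f(x+3h)$, we expand the product over four independent frequency variables $\xi_0, \xi_1, \xi_2, \xi_3 \in F^d$ to obtain
\[
T_W(f) = \sum_{\xi_0,\xi_1,\xi_2,\xi_3 \in F^d} \widehat\f(\xi_0)\widehat\f(\xi_1)\widehat\f(\xi_2)\widehat\f(\xi_3)\, \E_{x \in W, h \in \dot W} e_F\Big( \sum_{i=0}^3 \xi_i \cdot \Phi(x+ih) \Big).
\]
The inner expectation is, for each fixed $(\xi_0,\xi_1,\xi_2,\xi_3)$, the average of $e_F$ applied to a quadratic form in the variables $(x,h)$ on the affine space $W \times \dot W$. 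So the first step is to identify this quadratic form, compute its rank, and apply Lemma~\ref{gauss-lem}.

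The key computation is the rank analysis. Write $\psi := \sum_i \xi_i \cdot \phi_{\bullet}$ symbolically; its homogeneous part is $\sum_i \xi_i \cdot \dot\Phi(x+ih)$, a quadratic form on $\dot W \oplus \dot W$ in $(x,h)$. I would change variables to make the structure transparent: since $\dot\phi_j(x+ih) = \dot\phi_j(x) + i \cdot B_j(x,h) + i^2 \dot\phi_j(h)$ where $B_j$ is the symmetric bilinear form associated to $\dot\phi_j$, the coefficient of $\dot\phi_j(x)$ in the combined form is $(\xi_0+\xi_1+\xi_2+\xi_3)_j$, the coefficient of the cross term $B_j(x,h)$ is $(\xi_1 + 2\xi_2 + 3\xi_3)_j$, and the coefficient of $\dot\phi_j(h)$ is $(\xi_1 + 4\xi_2 + 9\xi_3)_j$. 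The central point is that unless a certain linear relation holds among the $\xi_i$, the $(x,x)$-block already contributes a quadratic form $\sum_j (\sum_i \xi_i)_j \dot\phi_j$ of rank $\geq r$ by the rank-separation hypothesis, forcing the whole form to have rank $\geq r$ (a block-triangular rank bound), so Lemma~\ref{gauss-lem} gives $|F|^{-r/2}$; more carefully, one checks that the only way the rank of the full $(x,h)$ form can drop below $r$ is if $\sum_i \xi_i = 0$, $\xi_1 + 2\xi_2 + 3\xi_3 = 0$, and $\xi_1 + 4\xi_2 + 9\xi_3 = 0$ simultaneously, and this linear system (valid since $p \geq 5$ so $2,3$ are invertible and the relevant van der Monde-type determinants are nonzero) forces $\xi_1 = \xi_3$, $\xi_2 = -\xi_0 - 2\xi_1$... actually solving it pins down the surviving frequencies to the one-parameter family $(\xi_0,\xi_1,\xi_2,\xi_3) = (\xi, -\xi + 3\eta - ... )$ — the cleanest statement is that the surviving set is exactly $\{\xi_0 - 3\xi_1 + 3\xi_2 - \xi_3 = 0\}$ together with enough further constraints to collapse it to the diagonal-type family parametrised by $(\xi,3\xi)$, matching the heuristic. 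For the "main term" frequencies, the inner expectation equals $1$ exactly (the form vanishes identically), and collecting these contributions reproduces $\sum_\xi |\widehat\f(\xi)|^2|\widehat\f(3\xi)|^2$.

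For the error term I would bound crudely: every non-main-term quadruple contributes at most $|F|^{-r/2}$ to the inner expectation, so the error is at most $|F|^{-r/2} \sum_{\xi_0,\xi_1,\xi_2,\xi_3} |\widehat\f(\xi_0)\widehat\f(\xi_1)\widehat\f(\xi_2)\widehat\f(\xi_3)| = |F|^{-r/2} \big(\sum_\xi |\widehat\f(\xi)|\big)^4$. Then, exactly as in Lemma~\ref{quad-expect}, Cauchy--Schwarz and Plancherel give $\sum_\xi |\widehat\f(\xi)| \leq |F|^{d/2}\|\f\|_{L^2(F^d)} \leq |F|^{d/2}$ using $|\f| \leq 1$. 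This yields the bound $|F|^{d/2 \cdot 4}\cdot|F|^{-r/2} = |F|^{(4d-r)/2}$ as claimed.

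The main obstacle is the rank bookkeeping in step two: one must carefully verify that the quadratic form in $(x,h)$ genuinely has rank $\geq r$ for every quadruple outside the precisely-identified exceptional family, and that this exceptional family is exactly the one producing the $\sum_\xi|\widehat\f(\xi)|^2|\widehat\f(3\xi)|^2$ term with the inner expectation equal to $1$ (not merely $O(1)$). This is where $p \geq 5$ enters — the linear algebra identifying the degenerate locus relies on invertibility of small integers — and it is also the step where a sloppy argument could either lose the exact main term or get a worse power of $|F|^d$ in the error. Handling the block structure of the form on $\dot W \oplus \dot W$ (the $(x,x)$, $(x,h)$, $(h,h)$ blocks) and concluding a rank lower bound for the whole from a rank lower bound on the $(x,x)$ block via the rank-separation condition is the technical heart; everything else is the same Fourier-plus-Plancherel routine already executed in Lemma~\ref{quad-expect}.
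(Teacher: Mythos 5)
Your proposal is correct and follows essentially the same route as the paper: Fourier expansion of $\f$ on $F^d$, a Gauss-sum bound of $|F|^{-r/2}$ for the coefficient of each off-diagonal frequency quadruple via the rank-separation condition, and Cauchy--Schwarz plus Plancherel to convert $\bigl(\sum_\xi |\hat{\f}(\xi)|\bigr)^4$ into $|F|^{2d}$; the only (cosmetic) difference is that the paper avoids your block analysis on $\dot W \oplus \dot W$ by substituting $x = y - i'h$ for a suitable $i'$ and applying Lemma \ref{gauss-lem} to the average in $h$ alone. The computation you left unfinished resolves as expected: the three constraints $\sum_i \xi_i = \sum_i i\,\xi_i = \sum_i i^2\xi_i = 0$ have solution set exactly $\{(\xi,-3\xi,3\xi,-\xi) : \xi \in F^d\}$ (third finite differences), on which the full phase vanishes identically by the identity $\Phi(x)-3\Phi(x+h)+3\Phi(x+2h)-\Phi(x+3h)=0$, and this family yields the main term $\sum_\xi |\hat{\f}(\xi)|^2|\hat{\f}(3\xi)|^2$.
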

\begin{proof}  Once again we use the Fourier expansion
$$ f(x) = \sum_{ \xi \in F^{d}} \widehat \f(\xi) 
e(\xi\cdot \Phi(x))$$
to obtain
\begin{equation}\label{to-use1} T_W(f) =
\sum_{\xi_0,\xi_1,\xi_2,\xi_3 \in F^d} m(\xi_0,\xi_1,\xi_2,\xi_3) \prod_{i=0}^3 \widehat \f(\xi_i)\end{equation}
where 
\begin{equation}\label{to-use2}m(\xi_0,\xi_1,\xi_2,\xi_3) := \E_{x \in W,h \in \dot{W}} e( \sum_{i=0}^3  \xi_i \cdot \Phi(x+ih) ).\end{equation}
Write $\Sigma \in (F^{d})^4$ for the set of all 4-tuples $(\xi_0,\xi_1,\xi_2,\xi_3)$ such that
\begin{equation}\label{xi-constraints}
3\xi_1 = -\xi_2 = \xi_3 = -3\xi_4 .
\end{equation}
We will shortly show that, for all choices of the $\xi_i$,
\begin{equation}\label{sigma-err}
|m(\xi_0,\xi_1,\xi_2,\xi_3) - 1_{\Sigma}(\xi_0,\xi_1,\xi_2,\xi_3)| \leq |F|^{-r/2}.
\end{equation}
Assuming this, we can compare \eqref{to-use1} with 
\[ \sum_{\xi \in F^{d}}|\hat{\f}(\xi)|^2 |\hat{\f}(3\xi)|^2  = \sum_{\xi_1,\xi_2,\xi_3,\xi_4 \in F^d} 1_{\Sigma}(\xi_0,\xi_1,\xi_2,\xi_3)\prod_{i=0}^3 \hat{f}(\xi_i),\]
obtaining
$$|T_W(f) -  \sum_{\xi \in F^{d}}|\hat{\f}(\xi)|^2 |\hat{\f}(3\xi)|^2 |  \leq  |F|^{-r/2} \sum_{ \xi_0, \xi_1,\xi_2,\xi_3 \in F^{d}} \prod_{i=0}^3 |\widehat \f(\xi_i)|.$$
Applying Cauchy-Schwarz and Plancherel as in the proof of the preceding lemma, we can bound
this by $|F|^{(4d-r)/2}$ as desired. 

It remains to prove \eqref{sigma-err}.  If $(\xi_0,\xi_1,\xi_2,\xi_3) \in \Sigma$ then this is trivial, since $m(\xi_0,\xi_1,\xi_2,\xi_3) =1$ in this case. Suppose, then,  that we do not have \eqref{xi-constraints}.
Then (by a simple inspection) we can find $i' \in \{0,1,2,3\}$ such that
$\sum_{i=0}^3 (i-i') \xi_i \neq 0$.  We can use the change of variables $x = y - i'h$ to write
\[m(\xi_0,\xi_1,\xi_2,\xi_3) = \E_{y \in W,h \in \dot{W}} e( \sum_{i=0}^3 \xi_i \cdot \Phi(y+(i-i')h) ).\]
It then follows from the rank condition that the phase $\sum_{i=0}^3 \xi_i \cdot \Phi(y+(i-i')h)$ contains a non-trivial quadratic
component in $h$ of rank at least $r$. By averaging over $h$ and applying Lemma \ref{gauss-lem}, we see that $m(\xi_0,\xi_1,\xi_2,\xi_3)$ has magnitude at most $|F|^{-r/2}$.  This concludes the proof of \eqref{sigma-err} and hence of the lemma.\vspace{11pt}
\end{proof} 

We now take advantage of the pleasant positivity properties of the sum \[ \sum_{\xi \in F^{d}}|\hat{\f}(\xi)|^2 |\hat{\f}(3\xi)|^2\] appearing in the preceding lemma to conclude the following lower bound.

\begin{corollary}\label{quad-4-increment} Let $W$ be an affine space, and suppose that
$\B$ is a quadratic factor on $W$ with complexity at most $d$ and rank $r \geq 10d$.  Suppose that $A \subseteq W$ is a set of density at least $\alpha$.  Then $T_W(\E(1_A| \B))  \geq \alpha^4 - O( |F|^{-3d} )$.
\end{corollary}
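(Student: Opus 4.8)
The plan is to combine the three facts about high-rank quadratic factors established above, with the positivity of the Fourier expression in Lemma~\ref{quad-expect-4} as the key input. First I would set $f := \E(1_A|\B)$; this is a $\B$-measurable function with values in $[0,1]$ and mean $\E_W f = |A|/|W| =: \alpha_A \geq \alpha$. Let $\f : F^d \to \C$ be the associated function on the configuration space (extended by zero off the image of $\Phi$), which is then also non-negative and bounded by $1$. Since the hypothesis $r \geq 10d$ forces $(4d-r)/2 \leq -3d$, Lemma~\ref{quad-expect-4} gives
$$ T_W(f) \;\geq\; \sum_{\xi \in F^d} |\hat{\f}(\xi)|^2 |\hat{\f}(3\xi)|^2 \;-\; O(|F|^{-3d}),$$
so it suffices to bound the Fourier sum below by $\alpha^4 - O(|F|^{-3d})$.

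The conceptual crux --- essentially the only idea, and one going back to Bergelson--Host--Kra --- is that every summand $|\hat{\f}(\xi)|^2|\hat{\f}(3\xi)|^2$ is non-negative, so the sum dominates its $\xi = 0$ term $|\hat{\f}(0)|^4$. Since $\f \geq 0$, the number $\hat{\f}(0) = \E_{F^d}\f$ is a non-negative real, so this term equals $(\E_{F^d}\f)^4$. Hence $T_W(f) \geq (\E_{F^d}\f)^4 - O(|F|^{-3d})$.

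It then remains only to compare $\E_{F^d}\f$ with $\alpha$, and for this I would invoke Lemma~\ref{quad-expect}, which (using $r \geq 7d$, a consequence of $r \geq 10d$) gives $\E_{F^d}\f \geq \E_W f - |F|^{(d-r)/2} \geq \alpha_A - |F|^{-3d} \geq \alpha - |F|^{-3d}$. Feeding this into the elementary inequality that $t \geq s - \delta$ with $t,s \in [0,1]$ and $\delta \geq 0$ implies $t^4 \geq s^4 - 4\delta$ (split into the cases $t \geq s$ and $s - \delta \leq t < s$, the latter via $s^4 - t^4 = (s-t)(s+t)(s^2+t^2) \leq 4(s-t)$), applied with $t = \E_{F^d}\f$, $s = \alpha_A$, and $\delta = |F|^{(d-r)/2}$, yields $(\E_{F^d}\f)^4 \geq \alpha_A^4 - 4|F|^{-3d} \geq \alpha^4 - 4|F|^{-3d}$. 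Combining this with the displayed bound gives $T_W(\E(1_A|\B)) = T_W(f) \geq \alpha^4 - O(|F|^{-3d})$.

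I do not expect a genuine obstacle here: Lemmas~\ref{quad-expect} and~\ref{quad-expect-4} have already done the substantive work of transferring everything to the finite group $F^d$, and the remaining argument is just the positivity observation together with error bookkeeping. The one point requiring mild care is that the hypothesis $r \geq 10d$ is precisely what is needed to absorb both the $|F|^{(4d-r)/2}$ error from counting progressions and the $|F|^{(d-r)/2}$ error from averaging into a single $O(|F|^{-3d})$ --- this is immediate from $(4d-r)/2 \leq -3d$ and $(d-r)/2 \leq -3d$ --- and that it is the non-negativity of $\f$ (hence of $\hat{\f}(0)$) that produces the sharp constant $\alpha^4$ rather than a weaker one.
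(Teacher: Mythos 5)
Your proof is correct and follows essentially the same route as the paper's: apply Lemma \ref{quad-expect-4}, drop all terms except $\xi=0$ by positivity, and compare $\hat{\f}(0)$ with $\alpha$ via Lemma \ref{quad-expect}. The only difference is that you spell out the elementary inequality $t^4 \geq s^4 - 4\delta$ which the paper leaves implicit.
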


\begin{proof}  Write $f := \E(1_A | \B)$ for notational brevity. Let $\Phi$ and $\f$ be as before: recall that $\Phi(x) = (\phi_1(x),\dots,\phi_{d}(x))$, where the $\phi_i$ are the quadratics defining $\B$ and that $\f$ is the unique $\B$-measurable function such that $\f(\Phi(x)) = f(x)$. Applying Lemma \ref{quad-expect-4}, and noting that $|F|^{(4d-r)/2} \leq |F|^{-3d}$, we have
$$ T_W(f) \geq \sum_{\xi \in F^{d}} |\hat{\f}(\xi)|^2 |\hat{\f}(3\xi)|^2 - |F|^{-3d}.$$
In particular, discarding all the terms with $\xi \neq 0$, we have
$$ T_W(f) \geq |\hat{\f}(0)|^4 - |F|^{-3d}.$$
Meanwhile, since $f$ has mean at least $\alpha$, we see from Lemma \ref{quad-expect} that
$$ |\hat{\f}(0)| \geq \alpha - |F|^{-3d}$$
(say).  The claim follows.\vspace{11pt}
\end{proof}

We can now prove Theorem \ref{main}.  Let $\alpha,\eps,A$ be as in that theorem.  We will weaken the conclusion of Theorem \ref{main} by replacing $\eps$ with $O(\eps)$; clearly, the original statement of the theorem can then be recovered by modifying $\eps$ by a multiplicative constant.  Thus, our objective is now to find an affine subspace $W'$ of $F^n$ of codimension $O( \eps^{-2^{20}} )$ such that $T_{W'}( 1_A ) \geq \alpha^4-O(\eps)$.
We may assume that $\eps \leq \alpha^4$, as the claim is trivial otherwise.

Set $\eta := \eps$, $d := \lfloor C_0 \eps^{-1} \eta^{-2^{17}}\rfloor = O( \eps^{-2^{18}} )$, and $r := 10d$ for some sufficiently large constant $C_0>0$ depending only on $F$.  By Theorem \ref{kvn-theorem}, we may find a subspace $W'$ of codimension $O( \eps^{-2^{20}} )$ and a quadratic factor $\B$ on $W'$ of rank at least $r$ and complexity at most $d$ such that $A$ has density at least $\alpha-\eps$ on $W'$, and such that
$$\Vert 1_{A} - \E(1_{A}|\B) \Vert_{U^3(W')} \leq \eps.$$
By Lemma \ref{u3-lambda4} it follows that
$$ T_{W'}( 1_A ) \geq T_{W'}( \E(1_A|\B) ) - O(\eps).$$
On the other hand, from Corollary \ref{quad-4-increment} one has
$$ T_{W'}( \E(1_A|\B) ) \geq (\alpha-\eps)^4 - O( |F|^{-3d}) = \alpha^4 - O(\eps) - O(|F|^{-3d}).$$
By choice of $d$, we certainly have $O(|F|^{-3d}) = O(\eps)$, and Theorem \ref{main} follows.

\appendix

\section{Erratum to previous paper}\label{erratum}

In this appendix we describe the error in our previous paper \cite{green-tao-r4-finite-fields}.

Fix some finite field $F$ of characteristic greater than $3$, for example $F = \F_5$. The main result \cite[Theorem 1.1]{green-tao-r4-finite-fields} of the aforementioned paper was a claimed proof of a statement of the same type as Corollary \ref{main-cor}: if $W$ is an affine space over $F$ and if $A \subseteq W$ has density at least $n^{-c}$, then $A$ contains four distinct elements in arithmetic progression. The attempted proof went via the so-called \emph{density increment strategy}: supposing that $A$ has density $\alpha$ and contains no 4-term progression, we located a reasonably large affine subspace $W' \leq W$ on which the density of $A$ is appreciably larger than $\alpha$. Iteration of this statement led to a contradiction.

This density increment was found in two steps. First of all the characteristic function $1_A$ was approximated in the Gowers $U^3$-norm by a ``quadratically structured'' function $\E(1_A | \B)$, where $\B$ is a quadratic factor: a partition of the underlying space $W$ into atoms defined by a collection of linear and quadratic phases. The relevant statement here is \cite[Theorem 6.6]{green-tao-r4-finite-fields} (a type of Koopman-Von Neumann theorem).

Secondly, we studied the number of 4-term progressions weighted by a quadratically structured function such as $\E(1_A | \B)$. A precise statement is \cite[Theorem 8.5]{green-tao-r4-finite-fields}. This eventually led to the conclusion that $A$ has increased density on some atom of $\B$, which we then decomposed into affine linear pieces to get the desired density increment.

This second phase required $\B$ to be \emph{high-rank}, which means that the quadratic phases defining $\B$ satisfy a rank separation condition (\cite[Definition 8.2]{green-tao-r4-finite-fields}, and see also Definition \ref{quad-factor-def} of the present paper). However, the factor $\B$ output by the Koopman-von Neumann theorem need not be high-rank. To get around this issue we stated and proved a lemma, \cite[Lemma 8.7]{green-tao-r4-finite-fields}, allowing one to refine an arbitrary quadratic factor $\B$ to a high-rank factor $\B'$. 

The problem with this is that, whilst $\E(1_A | \B)$ approximates $1_A$ in the $U^3$-norm, the same need not be true\footnote{As written in \cite{green-tao-r4-finite-fields}, this issue manifests itself in a slightly different way, namely in the last line of the paper when Theorem 8.8 is invoked in an attempt to prove Theorem 4.1.  Unfortunately, Theorem 8.8 is applied to a function $g = \E(f|\B_2)$ rather than to $f$ itself, and a density increment on $g$ on a subspace does not necessarily imply a corresponding density increment on $f$, because these subspaces do not come from partitioning an atom of $\B_2$, but rather from partitioning an atom from a finer factor $\B'$.  The obvious fix for this is to replace $g$ by $\E(f|\B')$, but this runs into the difficulty mentioned in the main text.} of $\E(1_A | \B')$.
What is needed is a Koopman-von Neumann theorem in which the output factor $\B$ is already high-rank. A result of this type is the main new development in this paper, specifically Theorem \ref{kvn-theorem}. Unfortunately we were only able to achieve this with usable bounds after first passing to a (large) subspace $W' \leq W$. We proceed using an energy-increment argument of basically the same type as that usually used to prove Koopman-von Neumann theorems, but with an additional rank-refinement step at each increment.

We remark that somewhat similar issues, albeit in a rather different language, are encountered (and correctly addressed) in \cite{gowers-wolf}. See in particular Theorem 5.7 there. In their application they cannot afford to pass to a subspace, and this is why their main theorem requires bounds of double-exponential type.

\end{document}